\newcommand{\ds}{\displaystyle }
\newcommand{\rs}[1]{{\mbox{\scriptsize \sc #1}}}
\newcommand{\vc}[1]{{\boldsymbol #1}} 
\newcommand{\sr}[1]{{\cal #1}}
 \newcommand{\dd}[1]{\mathbb{#1}}
 \newcommand{\rmn}[1]{\if#11I\else {\if#12I\hspace{-0.12ex}I\hspace{-0.85ex}\else {\if #13I\hspace{-0.16ex}I\hspace{-0.16ex}I\hspace{-1.6ex}\else I\hspace{-1.2ex}V \fi} \fi} \fi}
\newcommand{\eqn}[1]{(\ref{eqn:#1})}
\newcommand{\lem}[1]{Lemma~\ref{lem:#1}}
\newcommand{\cor}[1]{Corollary~\ref{cor:#1}}
\newcommand{\thr}[1]{Theorem~\ref{thr:#1}}
\newcommand{\app}[1]{Appendix~\ref{app:#1}}
\newcommand{\sectn}[1]{Section~\ref{sect:#1}}
\newcommand{\thrt}[1]{\ref{thr:#1}}
\newcommand{\sect}[1]{\ref{sect:#1}}
\newcommand{\br}[1]{\langle #1 \rangle}
\newcommand{\ol}{\overline}
\newcommand{\pend}{\hfill \thicklines \framebox(6.6,6.6)[l]{}}
\renewenvironment{proof}{\noindent {\sc  Proof.} \rm}{\pend}
\newenvironment{proof*}[1]{\noindent {\sc  #1} \rm}{\pend}
\newtheorem{theorem}{Theorem}[section]
\newtheorem{lemma}{Lemma}[section]
\newtheorem{remark}{Remark}[section]
\newtheorem{corollary}{Corollary}[section]
\newenvironment{mylist}[1]{\begin{list}{}
{\setlength{\itemindent}{#1mm}}
{\setlength{\itemsep}{0ex plus 0.2ex}}
{\setlength{\parsep}{0.5ex plus 0.2ex}}
{\setlength{\labelwidth}{10mm}}
}{\end{list}}
 \newcommand{\setnewcounter} {
 \setcounter{subsection}{0}
 \setcounter{equation}{0}
 \setcounter{conjecture}{0}
 \setcounter{assumption}{0}
 \setcounter{question}{0}
 \setcounter{definition}{0}
 \setcounter{theorem}{0}
 \setcounter{corollary}{0}
 \setcounter{lemma}{0}
 \setcounter{proposition}{0}
 \setcounter{remark}{0}
}
\begin{document}

\title{Two-node fluid network with 
a heavy-tailed random input: the 
strong stability case}

\author{Sergey Foss\\Heriot Watt University and\\Institute of Mathematics, Novosibirsk\\ \and Masakiyo Miyazawa\\ Tokyo University of Science}

\date{}

\maketitle

\begin{abstract}
We consider a two-node fluid network with batch arrivals  
of random size having a heavy-tailed distribution. 
We are interested in the tail asymptotics for the stationary distribution of a two-dimensional queue-length
process. The tail asymptotics have been well studied for two-dimensional reflecting processes where jumps have either a bounded or an unbounded light-tailed distribution. However, presence of heavy tails totally changes the asymptotics. Here we focus on the case of {\it strong
stability} 
where both nodes release fluid with sufficiently high speeds to minimise their mutual influence.
We show that, like in the one-dimensional case, big jumps provide the main cause for queues to become large, but now they may have multidimensional features. For deriving these results, we develop an analytic approach that 
differs from the traditional tail asymptotic studies, and obtain various weak tail equivalences. 
Then, in the case of one-dimensional subexponential jump-size distributions, we find the exact
asymptotics based on  
the sample-path
arguments.
\end{abstract}

\section{Introduction}
\label{sect:introduction}

Tail asymptotics problems have been studied in queueing networks 
and related reflecting processes for long years, but new 
developments are still going on. A key feature of that is an influence of the multiple boundary faces in a multidimensional 
state space. It requires an analysis that differs from the traditional one. Recent studies of those multidimensional processes have been mainly done in the {\it light-tail regime} 
where no heavy tails arise (see, e.g., \cite{Miya2011}). On the other hand, the heavy-tail asymptotics are mostly studied for processes with single boundary faces or for certain 
monotone characteristics (see, e.g., \cite{FossKorsZach2011} or \cite{BaccFoss2004}).

Thus, it is natural to ask how presence of heavy tails changes the tail asymptotics in multidimensional reflecting processes including queueing networks. The aim of this paper is to analyse this problem for the stationary distribution of a continuous-time reflecting process in the two-dimensional nonnegative quadrant. For this, we consider a two-node fluid network with a compound input with either Poisson or renewal arrivals, which is a simple model but still keeps the feature of a multidimensional reflecting process. 
It may be viewed as a continuous-time approximation of a generalised Jackson 
network with 
simultaneous arrivals of big batches of customers.

We analyse the tail asymptotics for this fluid network as follows.  First we assume Poisson arrivals and develop the analytic 
approach based on the stationary (balance) equation. We obtain lower and upper bounds for the stationary distributions in the coordinate and arbitrary directions 
(Theorems \thrt{option 0 1} and \thrt{bound 0a}) and deduce the weak tail equivalence
of those bounds under subexponentiality assumptions. Then we turn to the sample-path approach. We assume renewal arrivals and obtain a lower bound based on the 
long-tailedness of the jumps distributions and, then, under subexponentiality,
show that the lower bound provides the exact asymptotics.   
 
Our results relate to the tail asymptotics in generalised Jackson networks (and in more general classes of max-plus systems) with heavy-tailed distributions of service times that
have been studied in
 \cite{BaccFoss2004} (see also \cite{BaccFossLela2005,Lela2005a}). There
 the exact tail asymptotics was found only for the ``maximal data'' (time needed to empty the system in the steady state after stopping the input process). In tandem queues, the maximal data coincides with the stationary sojourn time of a ``typical'' customer. But the two notions differ when routing includes feedbacks. Another novel element of the paper is in considering a multidimensional heavy-tailed input, with possible dependence between its coordinates.

The paper is organised as follows. In Section \ref{sect:fluid}, we introduce  
a fluid network with random jumps and discuss its dynamics. 
Then Sections \sect{analytic 1} and \sect{analytic 2} deal with the analytic approach, and \sectn{sample} with the sample-path analysis. Appendix contains an auxiliary material that includes the analysis of the corresponding fluid model and basic definitions and properties of subexponential distributions.

\section{Fluid network with compound input}  \setnewcounter
\label{sect:fluid}

Consider a two-node fluid network where nodes $i=1,2$ receive an input process $\vc{\Lambda}(t)=(\Lambda_{1}(t),\Lambda_{2}(t))$, which is a compound process generated by point process $\{N(t); t \ge 0\}$ and i.i.d.\ jumps $\{(J_{1,n},J_{2,n}); n=1,2,\ldots\}$. In this Section, we only  assume that $\{N(s+t); t \ge 0 \}$ weakly converges to a stationary point process $\{N^{*}(t); t \in \dd{R} \}$ as $s \downarrow -\infty$ and its intensity $\lambda \equiv \dd{E}(N^{*}(1))$ is finite. In the subsequent sections, we will specify $\{N(t)\}$ to be a Poisson process for the analytic approach, and a renewal process for the sample-path approach. In what follows, we use a shorter 
notation $\vc{J} \equiv (J_{1},J_{2})$ for a random vector having the same distribution with 
$(J_{1,n},J_{2,n})$. The joint distribution of $(J_{1},J_{2})$ is denoted by $F$, with marginals $F_i$.     
We let $m_i= \dd{E} J_i$ and 
$\alpha_{i} = \lambda m_{i}$ for $i=1,2$.

Both nodes have infinite capacity buffers and release fluid with corresponding rates 
$\mu_{i}$, $i=1,2$. The $p_{ij}$ proportion of the outflow from node $i$ goes to node $j$ for $i,j=1,2$, while the remaining proportion $1-p_{ij}$ leaves the system. We assume that 
\begin{eqnarray}
\label{eqn:p condition}
  0 \le p_{12} p_{21} < 1, \quad 0 < p_{12} + p_{21}
\end{eqnarray}
to exclude the trivial boundary cases including parallel queues, $p_{12}=p_{21}=0$. Without loss of generality, we may also assume that $p_{ii}=0$, for $i=1.2$.

\begin{remark}
\label{rem:ext} 
One may assume that,
in addition to the jump input, there are continuous fluid inputs
to both queues, say with rates $\beta_{1}$ and $\beta_{2}$ respectively.
Given stability, such a model may be reduced to the original one,
by slowing down the release rates, namely, by replacing
$\mu_i$, $i=1,2$ with $\mu_i 
(1-(\beta_i+\beta_{3-i}p_{3-i,i})/(1-p_{12}p_{21}))$. 
\end{remark}

  We now introduce a buffer content process $\vc{Z}(t) \equiv (Z_{1}(t), Z_{2}(t))^{\rs{t}}$, which is defined as a nonnegative solution to the following equations:
\begin{eqnarray}
\label{eqn:Z1a}
  && Z_{1}(t) = Z_{1}(0) + \Lambda_{1}(t) +p_{21} (\mu_{2}t - Y_{2}(t)) - \mu_{1} t + Y_{1}(t),\\
\label{eqn:Z2a}
  && Z_{2}(t) = Z_{2}(0)+ \Lambda_{2}(t) +p_{12} (\mu_{1}t - Y_{1}(t)) - \mu_{2} t + Y_{2}(t),
\end{eqnarray}
  where $Y_{i}(t)$ is the minimal nondecreasing process that keeps $Z_{i}(t)$ to stay nonnegative. As usual, we assume that sample paths are right-continuous and have left-hand limits.
    
Let $\delta_{1} = \mu_{1} - \mu_{2} p_{21}$ and $\delta_{2} = \mu_{2} - \mu_{1} p_{12}$. 
Put $ X_{i}(t) 
=\Lambda_{i}(t) -  \delta_{i} t$ and let $\vc{X}(t) = (X_{1}(t), X_{2}(2))^{\rs{t}}$, $\vc{Y}(t) = (Y_{1}(t), Y_{2}(t))^{\rs{t}}$, and
\begin{eqnarray*}
  R = \left(\begin{array}{cc}1 & - p_{21} \\-p_{12} & 1\end{array}\right).
\end{eqnarray*}
Then \eqn{Z1a} and \eqn{Z2a} may be rewritten as
\begin{eqnarray}
\label{eqn:reflecting Jump}
  \vc{Z}(t) = \vc{Z}(0) + \vc{X}(t) + R \vc{Y}(t), \qquad t \ge 0.
\end{eqnarray}
This is the standard definition of a reflecting process (in the nonnegative quadrant $\dd{R}_{+}^{2}$) for a given process $\vc{X}(t)$, where $\vc{Y}(t)$ is a {\it regulator} such that $Y_{i}(t)$ increases only when $Z_{i}(t) = 0$. Here $R$ is called a {\it reflection matrix} (e.g., see Section 3.5 of \cite{Miya2011}). By \eqn{p condition}, the inverse $R^{-1}$ exists and is nonnegative. This guarantees the existence of $\{\vc{Z}(t); t \ge 0\}$. We refer to this process 
as to a {\it two-dimensional fluid network with compound inputs}.

  Since $R^{-1} \vc{\alpha}$ is the total inflow rate vector, the fluid network is stable if and only if
\begin{equation}
\label{eqn:stability 1}
  R^{-1} \vc{\alpha} < \vc{\mu} 
\end{equation}
where the inequality is strict in both coordinates. A formal proof for this stability condition can be found in \cite{Kell1996}. Let $\Delta_{i} = - \dd{E}(X_{i}(1))$. 
Then $\Delta_{i} = \delta_{i} - \alpha_{i}$, and 
  stability condition \eqn{stability 1} is equivalent to
\begin{equation}
\label{eqn:stability 2}
  \Delta_{1} + \Delta_{2} p_{21} > 0, \qquad \Delta_{1} p_{12} + \Delta_{2} > 0.
\end{equation}
Under condition \eqref{eqn:stability 1}, the stationary distribution, say $\pi$, of $\vc{Z}(t)$ uniquely exists. Let $\vc{Z} \equiv (Z_{1}, Z_{2})$ be a random vector subject to $\pi$.

We are interested in the tail behaviour of $\dd{P}(c_{1}Z_{1} + c_{2}Z_{2} > x)$ as $x$ goes to infinity, for a given {\it directional} vector $\vc{c} \equiv (c_{1}, c_{2}) \ge \vc{0}$ satisfying $c_{1} + c_{2} = 1$. In this paper, we consider this asymptotic mostly under the {\it strong stability} condition, that is
\begin{eqnarray}
\label{eqn:strong stability}
  \Delta_{1} > 0 \qquad \mbox{and} \qquad \Delta_{2} > 0.
\end{eqnarray}
 Other cases will be studied in a companion paper \cite{FossMiya2013}. Under \eqref{eqn:strong stability}, both nodes are sufficiently fast to process fluids given the input is always maximal,
 and the following holds.
\begin{lemma}
\label{lem:upper_parallel}

(a, Sample-path majorant). On any elementary event, consider an auxiliary model of two parallel queues, with node 
$i=1,2$ 
having a continuous input of rate $\mu_{3-i}p_{3-i,i}$, release rate $\mu_i$, and jump input
process $\Lambda_i$. Let $\tilde{Z}_{i}(t)$ be the content of node $i$ at time $t$.
If $\tilde{Z}_{i}(0)\ge Z_{i}(0)$, then $\tilde{Z}_{i}(t)\ge Z_{i}(t)$, for any $t$.

(b, Stable majorant). Assume  that the input is  a renewal process and \eqref{eqn:strong stability} holds. Then the processes $\tilde{Z}_{i}(t)$ admit a unique stationary version,
and,.
under the natural coupling of the input processes, $\tilde{Z}_{i} 
\ge {Z}_i$ a.s.
\end{lemma}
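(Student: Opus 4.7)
For part (a), the idea is to view each coordinate of \eqref{eqn:Z1a}--\eqref{eqn:Z2a} as a one-dimensional Skorokhod reflection at $0$. Rewrite
\[
Z_i(t) = \big[Z_i(0)+\Lambda_i(t)+p_{3-i,i}\mu_{3-i}t-\mu_i t - p_{3-i,i}Y_{3-i}(t)\big] + Y_i(t),
\]
so that $Z_i$ is the one-dimensional reflection at $0$ of the free input
\[
V_i(t) := Z_i(0)+\Lambda_i(t)+p_{3-i,i}\mu_{3-i}t-\mu_i t - p_{3-i,i}Y_{3-i}(t),
\]
with regulator $Y_i$. In the auxiliary parallel model, $\tilde Z_i$ is the reflection of
\[
\tilde V_i(t) := \tilde Z_i(0)+\Lambda_i(t)+p_{3-i,i}\mu_{3-i}t-\mu_i t.
\]
Since $Y_{3-i}$ is nondecreasing with $Y_{3-i}(0)=0$ and $\tilde Z_i(0)\ge Z_i(0)$, on every elementary event we have $V_i(t)\le \tilde V_i(t)$ for all $t\ge 0$. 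The one-dimensional Skorokhod map at $0$ is monotone in its free input (classical fact), so $Z_i(t)\le \tilde Z_i(t)$ for all $t$. The key point is that this is a purely pathwise comparison: we never need to unravel the joint fixed-point definition of $(Y_1,Y_2)$; only the sign of $Y_{3-i}$ and $\tilde Z_i(0)-Z_i(0)$ enter.

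For part (b), each $\tilde Z_i$ is a one-dimensional fluid queue driven by the renewal process $N$ with i.i.d.\ jumps $\{J_{i,n}\}$ and constant release rate, with mean drift of the free input equal to
\[
\alpha_i + \mu_{3-i}p_{3-i,i} - \mu_i \;=\; -\Delta_i \;<\; 0
\]
by \eqref{eqn:strong stability}. For a GI/G/1-type fluid queue with finite mean jump size and strictly negative drift, Loynes' construction produces the unique a.s.\ finite stationary version: the paths $\tilde Z_i^{(-T)}(\cdot)$ started at $0$ at time $-T$ are, at each fixed time, monotone nondecreasing in $T$, and the negative drift keeps their limit a.s.\ finite. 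To obtain the stationary domination, I would couple: drive both the network $\vc Z^{(-T)}$ and the parallel queues $\tilde{\vc Z}^{(-T)}$ by the same input $(N,\{\vc J_n\})$ and start both from $\vc 0$ at time $-T$. Part (a) applies pathwise and gives $\tilde Z_i^{(-T)}(t)\ge Z_i^{(-T)}(t)$ for every $t\ge -T$. Letting $T\to\infty$, the right-hand side converges to the stationary $Z_i$ (ergodicity of $\vc Z$ under \eqref{eqn:stability 1}, which is implied by strong stability, is established in \cite{Kell1996}), and the left-hand side to the stationary $\tilde Z_i$ by Loynes. Passing to the limit preserves the inequality.

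The mildest obstacle lies in part (b): one must check that these two limits can be realised \emph{simultaneously} under a single coupling of the input. This is routine—the monotone Loynes scheme works for $\tilde Z_i$ because of negative drift, and for $\vc Z$ because \eqref{eqn:stability 1} gives regenerative structure under renewal arrivals—so no new analysis beyond \cite{Kell1996} and one-dimensional fluid-queue theory is needed. Everything else in the lemma is a pathwise consequence of monotonicity of the scalar Skorokhod map.
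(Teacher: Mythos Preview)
Your argument for (a) is correct and a bit slicker than the paper's. The paper works directly between consecutive jump times: on each such interval both trajectories are Lipschitz, and at any regular point with $Z_i(t)>0$, $\tilde Z_i(t)>0$ one has $dZ_i/dt\le d\tilde Z_i/dt$ because the extra term $-p_{3-i,i}\,dY_{3-i}/dt$ is nonpositive; since jumps are synchronous and equal, induction over jump intervals finishes. You package the same observation as monotonicity of the scalar Skorokhod map, which avoids the case analysis on which process is at the boundary. For (b) the paper simply calls the conclusion ``straightforward''; your Loynes coupling spells it out correctly.

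One imprecision worth fixing: the Skorokhod reflection is \emph{not} monotone under mere pointwise ordering $V_i\le\tilde V_i$ of the free inputs. Take $V(0)=\tilde V(0)=0$, let $V$ dip linearly to $-1$ at time $1$ and return to $0$ at time $2$, and let $\tilde V\equiv 0$; then $\Phi(V)(2)=1>0=\Phi(\tilde V)(2)$. What you actually have, and what suffices, is that
\[
\tilde V_i(t)-V_i(t)=\bigl(\tilde Z_i(0)-Z_i(0)\bigr)+p_{3-i,i}\,Y_{3-i}(t)
\]
is nonnegative \emph{and nondecreasing} (equivalently, the increments of $\tilde V_i$ dominate those of $V_i$). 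Under that hypothesis the explicit formula $\Phi(V)(t)=V(t)\vee\sup_{0\le s\le t}\bigl(V(t)-V(s)\bigr)$ gives the comparison termwise. You already noted that $Y_{3-i}$ is nondecreasing, so the correct hypothesis is in place; just state the monotonicity in those terms rather than as ``monotone in its free input''.
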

{\sc Proof}. Between any two jumps, the trajectories of $Z_{i}(t)$ and $\tilde{Z}_{i}(t)$
are Lipschitz, and at any regular point $t$ with $Z_{i}(t)>0$, $\tilde{Z}_{i}(t)>0$, the derivative of $Z_{i}$ is smaller than that of $\tilde{Z}_{i}$. So inequality $\widetilde{Z}_{i}(t)\ge Z_{i}(t)$ is preserved between any two jumps. Since the jumps are synchronous and the jump
sizes are equal, the induction argument completes the proof of (a). Then statement (b) is straightforward.

\section{Analytic approach: Basic tools} \setnewcounter
\label{sect:analytic 1}

In this section, we assume that
\begin{itemize}
\item [(A1)] $\{\Lambda(t)\}$ is a compound Poisson process with rate $\lambda$,
\end{itemize}
and derive decomposition formulae for the stationary distribution in terms of moment generating functions. We first obtain the stationary balance equation under the stability condition \eqn{stability 2}.

Let $C^{1}(\dd{R}^{2})$ be the set of all functions from $\dd{R}^{2}$ to $\dd{R}$ having continuous first order partial derivatives. We write $f'_{i}(x_{1},x_{2})$ instead of $\frac {\partial}{\partial x_{i}} f(x_{1},x_{2})$, for short. It follows from \eqn{reflecting Jump} that, for $f \in C^{1}(\dd{R}^{2})$, the increment $f(\vc{Z}(1)) - f(\vc{Z}(0))$ can be expressed as integrations on $[0,1]$ with respect to $dt$, $d\Lambda(t)$ and $dY_{i}(t)$ (formally by It\^o's integral formula). Then, taking the expectations with $\vc{Z}(0)$ subject to the stationary distribution $\pi$ (we denote this expectation by $\dd{E}_{\pi}$) and recalling the stationary version $\vc{Z} \equiv (Z_{1}, Z_{2})$, we have
\begin{eqnarray}
\label{eqn:stationary 1}
  \lefteqn{\sum_{i=1}^{2} \left( - \delta_{i} \dd{E}_{\pi}(f'_{i}(\vc{Z}))\right) + \lambda \dd{E}_{\pi} (f(\vc{Z} + \vc{J}) - f(\vc{Z})) } \nonumber \\
  && + \dd{E}_{1} (f'_{1}(0, Z_{2}) - p_{12} f'_{2}(0, Z_{2})) + \dd{E}_{2} (- p_{21} f'_{1}(Z_{1}, 0) + f'_{2}(Z_{1}, 0)) = 0, \qquad
\end{eqnarray}
as long as all expectations are finite, where jump size vector $\vc{J}$ is independent of everything else. Here $\dd{E}_{i}$ represents the expectation with respect to the Palm measure concerning $\{Y_{i}(t)\}$, that is, for any bounded measurable function $g$ on $\dd{R}_{+}$,
\begin{eqnarray*}
  \dd{E}_{i}(g(Z_{3-i})) = \dd{E}_{\pi} \left( \int_{0}^{1} g(Z_{3-i}) Y_{i}(du) \right), \qquad i = 1,2.
\end{eqnarray*}
These expectations uniquely determine finite
measures $\nu_{i}$ on $(\dd{R}_{+}, \sr{B}(\dd{R}_{+}))$, where $\sr{B}(\dd{R}_{+})$ is the Borel 
$\sigma$-field on $\dd{R}_{+}$. They are called {\it boundary measures}. We denote a random variable with probability distribution $\frac 1{\nu(\dd{R}_{+})} \nu_{3-i}$ by $V_i$.

The stationary equation \eqn{stationary 1} uniquely determines the stationary distribution $\pi$ if it holds for a sufficiently large class of function $f$. For this, we may choose a class of exponential functions $f(\vc{x}) = e^{\br{\vc{\theta}, \vc{x}}}$ on $\dd{R}_{+}^{2}$ for each $\vc{\theta} \equiv (\theta_{1}, \theta_{2}) \le \vc{0}$, where $\br{\vc{a}, \vc{b}}$ stands for the inner product of vectors 
$\vc{a}, \vc{b} \in \dd{R}^{2}$. Let $\vc{\delta} = (\delta_{1}, \delta_{2})$, and let
\begin{eqnarray*}
 && \varphi(\vc{\theta}) = \dd{E}_{\pi}(e^{\br{\vc{\theta}, \vc{Z}}}), \qquad  \varphi_{3-i}(\theta_{i}) = \dd{E}_{3-i}(e^{\theta_{i} Z_{i}}), \qquad i = 1,2,\\
  && \widehat{F}(\vc{\theta}) = \dd{E} (e^{\br{\vc{\theta},\vc{J}}}), \qquad \kappa(\vc{\theta}) = \br{\vc{\delta}, \vc{\theta}} - \lambda (\widehat{F}(\vc{\theta}) - 1).
\end{eqnarray*}
Here $-\kappa(\vc{\theta})$ is the {\it  L\'{e}vy component} of $\vc{X}(t)$. Then, \eqn{stationary 1} becomes
\begin{eqnarray}
\label{eqn:stationary 2}
  \kappa(\vc{\theta}) \varphi(\vc{\theta}) = (\theta_{1} - p_{12} \theta_{2}) \varphi_{1}(\theta_{2}) + (\theta_{2}-p_{21}\theta_{1} ) \varphi_{2}(\theta_{1}),
\end{eqnarray}
as long as $\varphi(\vc{\theta})$, $\widehat{F}(\vc{\theta})$ and $\varphi_{i}(\theta_{i})$ are finite. Clearly, \eqn{stationary 2} is always valid for $\vc{\theta} \le \vc{0}$.

For convenience of computations in subsequent sections, we find first $\nu_{i}(\{0\})$ and $\nu_{i}(\dd{R}_{+})$. Clearly, they are identical with $\varphi_{i}(-\infty)$ and $\varphi_{i}(0)$, respectively.

\begin{lemma} 
\label{lem:boundary measures 1}
Under the stability condition \eqn{stability 2}, for $\vc{\theta} \le 0$, 
\begin{equation}
\label{eqn:boundary 1a}
 \varphi_{1}(\theta_{2}) = \delta_{1} \varphi(-\infty,\theta_{2}) + p_{21} \varphi_{2}(-\infty)  
\quad \mbox{and}
\quad 
\varphi_{2}(\theta_{1}) = \delta_{2} \varphi(\theta_{1}, -\infty) + p_{12} \varphi_{1}(-\infty).
\end{equation}
\end{lemma}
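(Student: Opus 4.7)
The plan is to derive both identities by passing to a limit in the stationary equation \eqref{eqn:stationary 2}. For the first identity I would fix $\theta_2 \le 0$, divide both sides of \eqref{eqn:stationary 2} by $\theta_1 < 0$, and let $\theta_1 \downarrow -\infty$; the second identity follows by the symmetric operation, dividing by $\theta_2$ and sending $\theta_2 \downarrow -\infty$ with $\theta_1 \le 0$ fixed. In other words, both formulas will be read off as boundary limits of the single balance relation \eqref{eqn:stationary 2}, which is known to hold on the entire negative orthant.

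After division by $\theta_1$, \eqref{eqn:stationary 2} becomes $\frac{\kappa(\vc{\theta})}{\theta_1}\,\varphi(\vc{\theta}) = \bigl(1 - p_{12}\theta_2/\theta_1\bigr)\varphi_1(\theta_2) + \bigl(\theta_2/\theta_1 - p_{21}\bigr)\varphi_2(\theta_1)$. Because $\vc{J} \ge \vc{0}$ and $\vc{\theta} \le \vc{0}$, we have $\widehat{F}(\vc{\theta}) \in [0,1]$, and therefore $\kappa(\vc{\theta})/\theta_1 = \delta_1 + \delta_2\theta_2/\theta_1 - \lambda(\widehat{F}(\vc{\theta}) - 1)/\theta_1 \to \delta_1$ as $\theta_1 \to -\infty$. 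For the limits of $\varphi$ and $\varphi_2$ I would invoke dominated convergence: the integrand $e^{\theta_1 Z_1 + \theta_2 Z_2}$ is bounded by $1$ and converges a.s.\ to $\mathbf{1}(Z_1 = 0)\,e^{\theta_2 Z_2}$, giving $\varphi(\vc{\theta}) \to \varphi(-\infty, \theta_2)$, while the identical argument under the Palm measure attached to $\dd{E}_2$ yields $\varphi_2(\theta_1) \to \varphi_2(-\infty)$.

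Combining these convergences with the trivial limits $1 - p_{12}\theta_2/\theta_1 \to 1$ and $\theta_2/\theta_1 - p_{21} \to -p_{21}$, the limiting identity reads $\delta_1\,\varphi(-\infty, \theta_2) = \varphi_1(\theta_2) - p_{21}\,\varphi_2(-\infty)$, which rearranges to the first formula of \eqref{eqn:boundary 1a}. The second formula is obtained verbatim by swapping the coordinates and using the same limiting procedure with $\theta_2 \downarrow -\infty$.

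The only delicate point is ensuring that the Palm boundary measures $\nu_1,\nu_2$ are finite so that dominated convergence and the limit $\varphi_i(-\infty) = \nu_{3-i}(\{0\})$ are legitimate. This is guaranteed by the stability condition \eqref{eqn:stability 2}: under stationarity the expected local time $\dd{E}_\pi Y_i(1)$ is finite, so $\nu_i(\dd{R}_+) < \infty$. Once this finiteness is in hand, no further obstacle remains and the rest of the argument is routine.
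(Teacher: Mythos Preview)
Your proof is correct and follows exactly the same route as the paper: divide \eqref{eqn:stationary 2} by $\theta_1$ to obtain \eqref{eqn:stationary 3}, send $\theta_1\to-\infty$, and invoke symmetry for the second identity. You have simply supplied more detail (dominated convergence, finiteness of the Palm measures) than the paper's two-line argument, but the approach is identical.
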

\begin{proof}
   Dividing \eqn{stationary 2} by $\theta_{1}$, we have
\begin{eqnarray}
\label{eqn:stationary 3}
  \left( \delta_{1} + \delta_{2} \frac {\theta_{2}}{\theta_{1}} - \frac {\lambda} {\theta_{1}} (\widehat{F}(\vc{\theta}) - 1) \right) \varphi(\vc{\theta}) = \left(1 - p_{12} \frac {\theta_{2}}{\theta_{1}} \right) \varphi_{1}(\theta_{2}) + \left(\frac {\theta_{2}}{\theta_{1}} -p_{21} \right) \varphi_{2}(\theta_{1}).
\end{eqnarray}
  Letting $\theta_{1}\to -\infty$, 
  we get the first equality in 
  \eqn{boundary 1a}. The symmetry gives the 
  second. 
  \end{proof}

  Denote the traffic intensities at nodes 1 and 2, respectively, by
\begin{eqnarray*}
  \rho_{1} = 
  ({\alpha_{1} + \alpha_{2} p_{21}})/({\mu_{1}(1-p_{12} p_{21})}), \qquad 
  \rho_{2} = 
  ({\alpha_{2} + \alpha_{1} p_{12}})/({\mu_{2}(1-p_{12} p_{21})}).
\end{eqnarray*}
  
\begin{lemma}
\label{lem:boundary measures 2}
Under the stability condition \eqn{stability 2}, for $i=1,2$,
\begin{eqnarray}
\label{eqn:phi 0}
 \varphi_{i}(-\infty) = \mu_{i} \pi(\vc{0}), \qquad
 \varphi_{i}(0) = \frac {\Delta_{i} + \Delta_{3-i} p_{(3-i)i}} {1-p_{12} p_{21}} = \mu_{i}(1-\rho_{i}).
\end{eqnarray}
In particular, $\pi(\vc{0}) = \dd{P}(\vc{Z} = \vc{0}) < \min(1-\rho_{1}, 1-\rho_{2})$, and, for $i=1,2$,
\begin{eqnarray}
\label{eqn:V1 Z}
 && \dd{P}(V_{i} > x) = \frac {\delta_{3-i}} {\mu_{3-i}(1-\rho_{3-i})} \dd{P}_{\pi}(Z_{i} > x, Z_{3-i} = 0), \qquad x \ge 0.
\end{eqnarray}
\end{lemma}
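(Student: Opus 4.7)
The plan is to extract both endpoint values of $\varphi_{i}$ from two distinct balance relations, and then obtain \eqn{V1 Z} and the strict inequality from the measure-level version of \lem{boundary measures 1}.

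For $\varphi_{i}(-\infty)$, I would let $\theta_{3-i}\downarrow-\infty$ in each of the two identities of \lem{boundary measures 1}. Bounded convergence yields $\varphi_{j}(\theta)\to\varphi_{j}(-\infty)$ and $\varphi(-\infty,\theta_{2})\to\pi(\vc{0})$ as $\theta\downarrow-\infty$, producing the $2\times 2$ linear system $R(\varphi_{1}(-\infty),\varphi_{2}(-\infty))^{\rs{t}}=\pi(\vc{0})(\delta_{1},\delta_{2})^{\rs{t}}$. Inverting $R$ via $R^{-1}=\tfrac{1}{1-p_{12}p_{21}}\bigl(\begin{smallmatrix}1 & p_{21}\\ p_{12} & 1\end{smallmatrix}\bigr)$ and using the one-line identity $\delta_{i}+p_{(3-i)i}\delta_{3-i}=\mu_{i}(1-p_{12}p_{21})$ (direct from the definitions of $\delta_{j}$) then gives $\varphi_{i}(-\infty)=\mu_{i}\pi(\vc{0})$.

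For $\varphi_{i}(0)$, the clean route is to take expectations on both sides of \eqn{reflecting Jump} at $t=1$, yielding $R\,\dd{E}_{\pi}\vc{Y}(1)=\vc{\Delta}$, so $\varphi_{i}(0)=\dd{E}_{\pi}Y_{i}(1)=(\Delta_{i}+p_{(3-i)i}\Delta_{3-i})/(1-p_{12}p_{21})$; the analogous algebraic manipulation rewrites this as $\mu_{i}(1-\rho_{i})$. The subtle point is that $\dd{E}_{\pi}Z_{i}$ need not be finite in the heavy-tailed regime, so $\dd{E}_{\pi}(Z_{i}(1)-Z_{i}(0))=0$ cannot be read off as a cancellation of means. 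I would instead use the Lipschitz truncation $z\mapsto z\wedge M$: stationarity gives $\dd{E}_{\pi}\bigl((Z_{i}(1)\wedge M)-(Z_{i}(0)\wedge M)\bigr)=0$ for every $M$, and since $\lvert(Z_{i}(1)\wedge M)-(Z_{i}(0)\wedge M)\rvert\le\lvert Z_{i}(1)-Z_{i}(0)\rvert=\lvert X_{i}(1)+(R\vc{Y}(1))_{i}\rvert$ is integrable (by $\dd{E}\lvert X_{i}(1)\rvert<\infty$ and $0\le Y_{i}(1)\le\mu_{i}$), dominated convergence closes the argument as $M\to\infty$.

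To obtain \eqn{V1 Z}, I would rewrite the first identity of \lem{boundary measures 1}, via uniqueness of Laplace transforms, in measure form: $\nu_{1}(dz)=\delta_{1}\dd{P}_{\pi}(Z_{1}=0,\,Z_{2}\in dz)+p_{21}\varphi_{2}(-\infty)\delta_{0}(dz)$. For $x>0$ the atom drops out, so $\nu_{1}((x,\infty))=\delta_{1}\dd{P}_{\pi}(Z_{1}=0,\,Z_{2}>x)$; dividing by $\nu_{1}(\dd{R}_{+})=\varphi_{1}(0)=\mu_{1}(1-\rho_{1})$ and recalling $V_{2}\sim\nu_{1}/\nu_{1}(\dd{R}_{+})$ gives the $i=2$ case of \eqn{V1 Z}, and symmetry handles $i=1$. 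Finally, $\pi(\vc{0})<\min(1-\rho_{1},1-\rho_{2})$ is equivalent to $\nu_{j}((0,\infty))=\delta_{j}\dd{P}_{\pi}(Z_{j}=0,\,Z_{3-j}>0)>0$ for both $j$; under strong stability \eqn{strong stability} one has $\delta_{j}>0$, and $\dd{P}_{\pi}(Z_{j}=0,\,Z_{3-j}>0)>0$ follows from a direct sample-path observation---for instance, just after an arrival that augments $Z_{3-j}$ while $Z_{j}$ was at $0$, the configuration $\{Z_{j}=0,Z_{3-j}>0\}$ persists for a time of positive length since $\delta_{j}>0$ guarantees that node $j$'s release rate $\mu_{j}$ strictly exceeds the through-flow $p_{(3-j)j}\mu_{3-j}$. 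I expect this sample-path step, together with the need to circumvent possibly infinite $\dd{E}_{\pi}Z_{i}$, to be the only non-mechanical ingredients; the rest is direct manipulation of \lem{boundary measures 1}.
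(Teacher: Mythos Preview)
Your proposal is correct and, for $\varphi_i(-\infty)$, matches the paper exactly: both let $\theta_{3-i}\to-\infty$ in \lem{boundary measures 1}, solve the resulting $2\times 2$ system, and use $\delta_i+p_{(3-i)i}\delta_{3-i}=\mu_i(1-p_{12}p_{21})$.

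For $\varphi_i(0)$ the two routes diverge slightly. The paper stays inside the moment-generating-function framework: it sets $\theta_2=0$ in \eqn{stationary 3} and lets $\theta_1\to 0$ (the printed ``$\theta_1\to-\infty$'' is a slip), obtaining $\Delta_1=\varphi_1(0)-p_{21}\varphi_2(0)$ directly; symmetry gives the second equation. You instead take expectations in the sample-path relation \eqn{reflecting Jump}, which yields the same system $R(\varphi_1(0),\varphi_2(0))^{\rs{t}}=\vc{\Delta}$ but forces you to confront the possibility that $\dd{E}_\pi Z_i=\infty$; your truncation-plus-dominated-convergence fix is valid, and your bound $0\le Y_i(1)\le\mu_i$ is correct. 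The paper's route is marginally cleaner because continuity of the MGFs for $\vc{\theta}\le\vc{0}$ sidesteps the integrability issue entirely, but both arguments are short and the end system is identical.

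The paper's displayed proof actually stops at \eqn{phi 0} and leaves both \eqn{V1 Z} and the strict inequality $\pi(\vc{0})<\min(1-\rho_1,1-\rho_2)$ implicit, so your measure-level reading of \lem{boundary measures 1} for \eqn{V1 Z} goes beyond what the paper writes out and is correct. One caution: your argument for strictness invokes $\delta_j>0$, which you justify via \eqn{strong stability}; but the lemma is stated under the weaker \eqn{stability 2}, which does not by itself force $\delta_j>0$. Since the paper does not supply a proof of strictness either, this is not a discrepancy with the paper's argument, but you may want to flag that the sample-path step as written uses the stronger hypothesis.
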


\begin{proof}
Letting $\theta_{1}$ and $\theta_{2}$ to $-\infty$ in \eqn{boundary 1a}, 
we get
$$\varphi_{1}(-\infty) = \delta_{1} \varphi(-\infty,-\infty) + p_{21} \varphi_{2}(-\infty) ,
\quad
\varphi_{2}(-\infty) = \delta_{2} \varphi(-\infty, -\infty) + p_{12} \varphi_{1}(-\infty).
$$
Since $\varphi(-\infty,-\infty) = \pi(\vc{0})$, solving these equations yields $\varphi_{i}(-\infty) = \mu_{i} \pi(\vc{0})$. On the other hand, with putting $\theta_{2} = 0$,
letting $\theta_{1} \to -\infty$ in \eqn{stationary 3}, and using the symmetry, we get 
\begin{eqnarray*}
  \Delta_{1} + p_{21} \varphi_{2}(0) = \varphi_{1}(0), \qquad \Delta_{2} + p_{12} \varphi_{1}(0) = \varphi_{2}(0).
\end{eqnarray*}
Solving these equation yields the first equality of \eqn{phi 0} for $\varphi_{i}(0)$. The second equality is immediate from the definitions of $\Delta_{i}$ and $\rho_{i}$.
\end{proof}

From equations \eqn{phi 0}, it is easy to see that $(\varphi_{1}(0), \varphi_{2}(0))^{\rs{t}} = \vc{\mu} - R^{-1} \vc{\alpha} > \vc{0}$. Also, the second equality for $\varphi_{i}(0)$ yields another representation for $\Delta_{i}$:
\begin{eqnarray}
\label{eqn:Delta 2}
  \Delta_{1} = \mu_{1}(1-\rho_{1}) - \mu_{2} p_{21}(1 - \rho_{2}), \qquad \Delta_{2} = \mu_{2}(1-\rho_{2}) - \mu_{1} p_{12}(1 - \rho_{1}).
\end{eqnarray}

We now turn to the tail probabilities. To this end, we single out the moment generation function $\varphi(s\vc{c})$ of one-dimensional random variable $c_{1}Z_{1} + c_{2}Z_{2}$ from the the stationary equation \eqn{stationary 2} or \eqn{stationary 3}.
Namely, we 
(i) derive $\varphi(s\vc{c})$ as a linear combination of the moment generating functions of 
certain measures, which may include unknown boundary measures $\nu_{1}$ and $\nu_{2}$; 
(ii) find lower and upper bounds for the tail probability $\dd{P}_{\pi}(c_{1}Z_{1} + c_{2}Z_{2} > x)$ from the expression obtained in (i);
then (iii) obtain the asymptotics for $\dd{P}_{\pi}(c_{1}Z_{1} + c_{2}Z_{2} > x)$ as $x \to \infty$ using the heavy-tailedness of jump size distributions.

Clearly, the most important step is (i). Our arguments are similar
to that in deriving the Pollaczek-Khinchine formula from the stationary equation of the $M/G/1$ queue. However, we have to be careful because here the reflecting process is two-dimensional, while the boundary is a single point in the case of the $M/G/1$ queue. Also our analysis
depends 
on direction $\vc{c}$.

We first write the stationary equation \eqn{stationary 2} with $\vc{\theta} = s \vc{c}$ for a directional vector $\vc{c}$ as
\begin{eqnarray}
\label{eqn:stationary 4}
  \left(c_{1} \delta_{1} + c_{2}\delta_{2} - \frac {\lambda} {s} (\widehat{F}(s\vc{c}) - 1)\right) \varphi(s\vc{c}) = (c_{1} - p_{12} c_{2}) \varphi_{1}(c_{2} s) + (c_{2} - p_{21} c_{1}) \varphi_{2}(c_{1} s) . \quad
\end{eqnarray}
To express the coefficient of $\varphi(s\vc{c})$ in a compact form, we introduce the integrated probability distribution $F^{I}_{\vc{c}}$ by 
\begin{eqnarray*}
F^{I}_{\vc{c}}(x) = 1-\frac{1}{m_{\vc{c}}} \int_x^{\infty} \dd{P} (c_{1} J_{1} + c_{2} J_{2}>y) dy, \quad
x\ge 0,
\end{eqnarray*}
where $m_{\vc{c}} = c_{1} m_{1} + c_{2} m_{2}$, and denote its moment generating function by $\widehat{F}^{I}_{\vc{c}}$. Namely, $\widehat{F}^{I}_{\vc{c}}(s) = (\widehat{F}(s \vc{c}) - 1)/(m_{\vc{c}} s)$. For positive $r < 1$, let
\begin{eqnarray*}
  \widehat{S}^{I(r)}_{\vc{c}}(s) = (1-r) (1 - r \widehat{F}^{I}_{\vc{c}}(s))^{-1},
\end{eqnarray*}
be the moment generating function of the geometric sum with parameter $r$ of $i.i.d.$ random variables having distribution $F_{\vc{c}}^I$. 

By the strong stability assumption \eqn{strong stability}, $c_{2} \delta_{1} + c_{2} \delta_{2} - (c_{1} \alpha_{1} + c_{2} \alpha_{2} ) = c_{1} \Delta_{1} + c_{2} \Delta_{2} > 0$, and therefore
\begin{eqnarray*}
  r_{\vc{c}} 
  := \frac {c_{1} \alpha_{1} + c_{2} \alpha_{2}} {c_{1} \delta_{1} + c_{2} \delta_{2}} < 1.
\end{eqnarray*}
Since $\lambda m_{\vc{c}} = c_{1} \alpha_{1} + c_{2} \alpha_{2}$, \eqn{stationary 4} can be rewritten as
\begin{eqnarray}
\label{eqn:stationary 6}
  \varphi(s\vc{c}) =\left(  (c_{1} - p_{12} c_{2}) \varphi_{1}(c_{2} s) + (c_{2} - p_{21} c_{1}) \varphi_{2}(c_{1} s) \right) \widehat{S}^{I(r_{\vc{c}})}_{\vc{c}}(s).
\end{eqnarray}
Thus, if the coefficients of $\varphi_{1}(c_{2} s)$ and $\varphi_{2}(c_{1} s)$ are positive, then we have a decomposition for the distribution of $c_{1} Z_{1} + c_{2} Z_{2}$. However, those coefficients may be negative. 

We note that, if both of $c_{1} - p_{12} c_{2} $ and $c_{2} - p_{21} c_{1} $ are non-positive, then $c_{1} (1- p_{12} p_{21}) \le 0$ and $c_{2} (1- p_{12} p_{21}) \le 0$, which contradict \eqn{p condition} and $\vc{c} \ne \vc{0}$. Hence, either one of them is positive at least, and the following three cases are only possible under \eqn{strong stability}. 
\begin{mylist}{5}
\item [(C0)] $c_{1} - p_{12} c_{2} \ge 0$ and $c_{2} - p_{21} c_{1} \ge 0$ (in this case, we must have $\vc{c} > \vc{0}$).
\item [(C1)] $c_{1} - p_{12} c_{2} \ge 0$ and $c_{2} - p_{21} c_{1} < 0$, \hspace{2ex} (C2) $c_{1} - p_{12} c_{2} < 0$ and $c_{2} - p_{21} c_{1} \ge 0$.
\end{mylist}
Since (C1) and (C2) are symmetric, we consider only (C0) and (C1).

Recall that $V_{1}$ and $V_{2}$ have the probability distributions normalized by $\nu_{1}$ and $\nu_{2}$, respectively. Then, from \eqn{stationary 6}, we have the following lemma.

\begin{lemma}
\label{lem:decomposition 1}
For directional vector $\vc{c} \ge \vc{0}$, we have, for $B \in \sr{B}(\dd{R}_{+})$,
\begin{eqnarray}
\label{eqn:decomposition 1}
  \lefteqn{\dd{P}_{\pi}(c_{1} Z_{1} + c_{2} Z_{2} \in B)} \nonumber\\
  && = \eta^{(1)}_{\vc{c}} \mu_{1} (1-\rho_{1}) \dd{P}(c_{2} V_{2} + S_{\vc{c}}^{I(r_{\vc{c}})} \in B) + \eta^{(2)}_{\vc{c}} \mu_{2} (1-\rho_{2}) \dd{P}(c_{1} V_{1} + S_{\vc{c}}^{I(r_{\vc{c}})} \in B) , \qquad
\end{eqnarray}
  where $V_{1}$ and $V_{2}$ are independent of $S_{\vc{c}}^{I(r_{\vc{c}})}$, and 
\begin{eqnarray}
\label{eqn:eta}
\eta^{(1)}_{\vc{c}} = \frac {c_{1} - p_{12} c_{2}} {c_{1} \Delta_{1} + c_{2} \Delta_{2}} ,
\qquad
\eta^{(2)}_{\vc{c}} = \frac {c_{2} - p_{21} c_{1}} {c_{1} \Delta_{1} + c_{2} \Delta_{2}} .
  \end{eqnarray}
\end{lemma}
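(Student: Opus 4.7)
The plan is to derive \eqref{eqn:decomposition 1} by inverting the transform identity \eqref{eqn:stationary 6} term by term. Every factor on the right-hand side already admits a probabilistic interpretation: by construction, $\widehat{S}^{I(r_{\vc{c}})}_{\vc{c}}(s)$ is the Laplace transform of the distribution of the geometric sum $S^{I(r_{\vc{c}})}_{\vc{c}}$, while the boundary transforms $\varphi_1(c_2 s)$ and $\varphi_2(c_1 s)$ can be re-expressed in terms of the normalised boundary variables $V_1,V_2$. Once these rewrites are in place, the identity to be proved will drop out of uniqueness of Laplace transforms.

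First, using $\varphi_{i}(0) = \mu_{i}(1-\rho_{i})$ from \lem{boundary measures 2} together with the definition of $V_i$ as a random variable with distribution $\nu_{3-i}/\nu_{3-i}(\dd{R}_+)$, I would write
\begin{eqnarray*}
\varphi_{1}(c_2 s) = \mu_{1}(1-\rho_{1})\, \dd{E}\bigl(e^{c_2 s V_2}\bigr), \qquad \varphi_{2}(c_1 s) = \mu_{2}(1-\rho_{2})\, \dd{E}\bigl(e^{c_1 s V_1}\bigr).
\end{eqnarray*}
Next, dividing \eqref{eqn:stationary 6} through by $c_1\Delta_1 + c_2\Delta_2$ to convert the coefficients on the right-hand side into $\eta^{(1)}_{\vc{c}}$ and $\eta^{(2)}_{\vc{c}}$ as defined in \eqref{eqn:eta}, and choosing $V_1,V_2$ to be independent of an independent copy of $S^{I(r_{\vc{c}})}_{\vc{c}}$ (which is legitimate since only their marginal laws enter the final statement), I recognise each product $\dd{E}(e^{c_{3-i} s V_{3-i}})\,\widehat{S}^{I(r_{\vc{c}})}_{\vc{c}}(s)$ as the Laplace transform of the convolution distribution of $c_{3-i}V_{3-i} + S^{I(r_{\vc{c}})}_{\vc{c}}$. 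This gives
\begin{eqnarray*}
\varphi(s\vc{c}) = \eta^{(1)}_{\vc{c}}\mu_{1}(1-\rho_{1})\, \dd{E}\bigl(e^{s(c_2 V_2 + S^{I(r_{\vc{c}})}_{\vc{c}})}\bigr) + \eta^{(2)}_{\vc{c}}\mu_{2}(1-\rho_{2})\, \dd{E}\bigl(e^{s(c_1 V_1 + S^{I(r_{\vc{c}})}_{\vc{c}})}\bigr),
\end{eqnarray*}
valid for all $s\le 0$, on which range all transforms are finite.

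Finally, both sides are Laplace transforms of finite signed measures on $\dd{R}_+$ and agree on the whole negative real axis, so uniqueness of Laplace transforms lifts the identity from transforms to measures, yielding \eqref{eqn:decomposition 1} for every $B\in\sr{B}(\dd{R}_+)$. The only real subtlety is that in case (C1) one has $\eta^{(2)}_{\vc{c}}<0$, so \eqref{eqn:decomposition 1} expresses a probability measure as a signed combination of two probability measures rather than as a convex mixture; uniqueness of Laplace transforms for finite signed measures handles this without modification. As a sanity check, setting $B=\dd{R}_+$ reduces the identity to $1 = [(c_1-p_{12}c_2)\mu_1(1-\rho_1) + (c_2-p_{21}c_1)\mu_2(1-\rho_2)]/(c_1\Delta_1+c_2\Delta_2)$, which holds by \eqref{eqn:Delta 2}.
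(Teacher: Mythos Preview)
Your proposal is correct and follows precisely the route the paper intends: the paper simply states that the lemma follows from \eqref{eqn:stationary 6}, and you have spelled out the details---normalising $\varphi_i$ via \lem{boundary measures 2}, recognising the products as Laplace transforms of convolutions, and invoking uniqueness of Laplace transforms (for signed measures in case (C1)). Your sanity check at $B=\dd{R}_+$ is exactly the computation $c_1\Delta_1+c_2\Delta_2 = (c_1-p_{12}c_2)\mu_1(1-\rho_1)+(c_2-p_{21}c_1)\mu_2(1-\rho_2)$ that also shows the normalising constant in the passage from \eqref{eqn:stationary 4} to the $\eta^{(i)}_{\vc{c}}$ form is indeed $c_1\Delta_1+c_2\Delta_2$.
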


Since $\eta^{(1)}_{\vc{c}} \mu_{1} (1-\rho_{1}) + \eta^{(2)}_{\vc{c}} \mu_{1} (1-\rho_{1}) =1$ from \eqn{decomposition 1} with $B = \dd{R}_{+}$ and $\eta^{(i)}_{\vc{c}}$ for $i=1,2$ are positive for (C0), we get the following lower bound.

\begin{corollary}
\label{cor:lower bound 1}
For the case (C0), we have
\begin{equation}
\label{eqn:LB++}
\dd{P}(S_{\vc{c}}^{I(r_{\vc{c}})} > x) \le \dd{P}_{\pi}(c_{1} Z_{1} + c_{2} Z_{2} > x) \qquad
\mbox{for all } x >0.
\end{equation}
\end{corollary}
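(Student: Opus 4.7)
The plan is to apply Lemma~\lemt{decomposition 1} with $B = (x,\infty)$ and observe that, in case (C0), the resulting identity exhibits $\dd{P}_{\pi}(c_{1}Z_{1}+c_{2}Z_{2}>x)$ as a genuine convex combination of two tail probabilities, each of which trivially dominates $\dd{P}(S_{\vc{c}}^{I(r_{\vc{c}})}>x)$.

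More concretely, first I would check that, under (C0) together with the strong stability condition \eqn{strong stability}, the two weights $w_{i}:= \eta^{(i)}_{\vc{c}}\mu_{i}(1-\rho_{i})$ appearing in \eqn{decomposition 1} are nonnegative. By the definition \eqn{eta}, the numerators of $\eta^{(i)}_{\vc{c}}$ are $\ge 0$ in (C0); the common denominator $c_{1}\Delta_{1}+c_{2}\Delta_{2}$ is strictly positive by \eqn{strong stability} and $\vc{c}\ge\vc{0}$, $\vc{c}\ne\vc{0}$; and $\mu_{i}(1-\rho_{i})=\varphi_{i}(0)>0$ by \lem{boundary measures 2}. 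Plugging $B=\dd{R}_{+}$ into \eqn{decomposition 1} gives $w_{1}+w_{2}=1$ (this is what the sentence preceding the corollary already records, modulo the evident typo).

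Next, since $V_{1},V_{2}$ are nonnegative random variables (they live on $\dd{R}_{+}$) and are, by Lemma~\lemt{decomposition 1}, independent of $S_{\vc{c}}^{I(r_{\vc{c}})}$, conditioning on $S_{\vc{c}}^{I(r_{\vc{c}})}$ yields, for $i=1,2$,
\begin{equation*}
\dd{P}\bigl(c_{i}V_{i}+S_{\vc{c}}^{I(r_{\vc{c}})}>x\bigr)\;\ge\;\dd{P}\bigl(S_{\vc{c}}^{I(r_{\vc{c}})}>x\bigr),\qquad x>0.
\end{equation*}
Substituting these two inequalities into \eqn{decomposition 1} with $B=(x,\infty)$ and using $w_{1}+w_{2}=1$ gives \eqn{LB++}.

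There is no real obstacle here: the corollary is essentially a direct reading of Lemma~\lemt{decomposition 1} once one has checked the sign of the coefficients in case (C0), and this sign check is exactly what distinguishes (C0) from (C1)--(C2), where the argument must be replaced by something more delicate because one of the $\eta^{(i)}_{\vc{c}}$ becomes negative.
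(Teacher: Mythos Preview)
Your proposal is correct and follows essentially the same approach as the paper: the authors simply note that the weights $\eta^{(i)}_{\vc{c}}\mu_{i}(1-\rho_{i})$ are nonnegative in case (C0) and sum to $1$ (by taking $B=\dd{R}_{+}$ in \eqn{decomposition 1}), from which the lower bound is immediate. You have merely spelled out the implicit step that $c_{i}V_{i}\ge 0$ forces each convolution tail to dominate $\dd{P}(S_{\vc{c}}^{I(r_{\vc{c}})}>x)$.
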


For the case (C1), \eqn{decomposition 1} cannot be used to get a lower bound, and we use another expression. Let
$\varphi^{+}(\vc{\theta}) = \dd{E}(e^{\br{\vc{\theta}, \vc{Z}}} 1(\vc{Z} > \vc{0}))$ and $d_{0} = \frac 1{\delta_{1} \delta_{2}} \mu_{1} \mu_{2} (1-p_{12}p_{21}) \pi(\vc{0})$.
From \lem{boundary measures 1}, we have
\begin{eqnarray}
\label{eqn:another 1}
  \varphi(s\vc{c}) = \varphi^{+}(s\vc{c}) + \frac {\varphi_{2}(c_{1}s)} {\delta_{2}} + \frac {\varphi_{1}(c_{2}s)} {\delta_{1}} - d_{0}
\end{eqnarray}
where we have used the fact that
\begin{eqnarray*}
  \delta_{1} p_{12} \varphi_{1}(-\infty) + \delta_{2} p_{21} \varphi_{2}(-\infty) + \delta_{1} \delta_{2} \pi(\vc{0})
 = \mu_{1} \mu_{2} (1-p_{12}p_{21}) \pi(\vc{0}) = \delta_{1} \delta_{2} d_{0}. \quad
\end{eqnarray*}

\begin{lemma}
\label{lem:decomposition 2}
  For the case (C1), let $\ds r'_{\vc{c}} = \frac {c_{1} \alpha_{1} + c_{2} \alpha_{2}} {c_{1} (\delta_{1} + \delta_{2} p_{21})}$, then $0< r'_{\vc{c}} < 1$ and
\begin{eqnarray}
\label{eqn:decomposition 2a}
  \varphi(s\vc{c}) = \Big(\frac {d^{(1)}_{\vc{c}}}{\delta_{1}} \varphi_{1}(c_{2} s) + d^{(2)}_{\vc{c}} \left(\varphi^{+}(s\vc{c}) - d_{0}\right)\Big) \widehat{S}_{\vc{c}}^{I(r'_{\vc{c}})}(s),
\end{eqnarray}
where
\begin{eqnarray*}
 d^{(1)}_{\vc{c}} = \frac {\delta_{1}(c_{1} - p_{12} c_{2}) + \delta_{2} (p_{21} c_{1} - c_{2})} {c_{1} (\delta_{1} + \delta_{2} p_{21})(1-r'_{\vc{c}})}, \quad
 d^{(2)}_{\vc{c}} = \frac {\delta_{2}(p_{21} c_{1} - c_{2})} {c_{1} (\delta_{1} + \delta_{2} p_{21})(1-r'_{\vc{c}})}.
\end{eqnarray*}
Therefore
\begin{eqnarray}
\label{eqn:decomposition 2b}
  \dd{P}(S_{\vc{c}}^{I(r'_{\vc{c}})} > x) \le \dd{P}_{\pi}(c_{1} Z_{1} + c_{2} Z_{2} > x), \qquad x \ge 0.
\end{eqnarray}
\end{lemma}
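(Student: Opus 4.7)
I follow the template of \lem{decomposition 1}: manipulate the stationary equation so that $\varphi(s\vc{c})$ factors as $A(s) \widehat{S}^{I(r'_{\vc{c}})}_{\vc{c}}(s)$, and then read off the tail lower bound at the level of measures. The new feature in case (C1) is that the coefficient of $\varphi_2(c_1 s)$ in \eqn{stationary 6} is negative, so a direct reading gives no useful bound. To fix this I eliminate $\varphi_2(c_1 s)$ via the auxiliary identity \eqn{another 1}, which replaces it with the combination $\varphi^+(s\vc{c}) - d_0$ and a rescaled $\varphi_1(c_2 s)$; the price is that the geometric-sum parameter changes from $r_{\vc{c}}$ to $r'_{\vc{c}}$. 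The preliminary bounds are straightforward: $r'_{\vc{c}} > 0$ is obvious, and $r'_{\vc{c}} < 1$ rearranges to $c_2 \alpha_2 < c_1 \Delta_1 + c_1 p_{21}\delta_2$, which follows from $c_2 < p_{21} c_1$ (case (C1)) combined with $\alpha_2 < \delta_2$ (strong stability). Nonnegativity of $d^{(1)}_{\vc{c}}, d^{(2)}_{\vc{c}}$ is by inspection in (C1), using $\delta_1 + p_{21} \delta_2 = \mu_1(1 - p_{12} p_{21}) > 0$.

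\textbf{Deriving \eqn{decomposition 2a}.} Starting from \eqn{stationary 4}, substitute $\varphi_2(c_1 s) = \delta_2[\varphi(s\vc{c}) - \varphi^+(s\vc{c}) - \varphi_1(c_2 s)/\delta_1 + d_0]$ and transfer the resulting $(c_2 - p_{21} c_1)\delta_2 \varphi(s\vc{c})$ to the left-hand side. Using $\lambda(\widehat{F}(s\vc{c}) - 1)/s = (c_1\alpha_1 + c_2\alpha_2) \widehat{F}^I_{\vc{c}}(s)$, the coefficient of $\varphi(s\vc{c})$ simplifies to $c_1(\delta_1 + p_{21}\delta_2)(1 - r'_{\vc{c}} \widehat{F}^I_{\vc{c}}(s))$. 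The $\varphi_1(c_2 s)$ terms on the right assemble into $[\delta_1(c_1 - p_{12}c_2) + \delta_2(p_{21}c_1 - c_2)]/\delta_1 \cdot \varphi_1(c_2 s)$, while the remaining constants and $\varphi^+$ collect into $\delta_2(p_{21}c_1 - c_2)(\varphi^+(s\vc{c}) - d_0)$. Dividing through and using $1/(1 - r'_{\vc{c}}\widehat{F}^I_{\vc{c}}(s)) = \widehat{S}^{I(r'_{\vc{c}})}_{\vc{c}}(s)/(1-r'_{\vc{c}})$ yields \eqn{decomposition 2a} with the stated $d^{(i)}_{\vc{c}}$.

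\textbf{Tail bound.} The bracketed factor in \eqn{decomposition 2a} is the moment generating function of a \emph{signed} measure, which I write as $A = A_+ - A_-$ on $\dd{R}_+$: here $A_+$ is the nonnegative measure with mgf $(d^{(1)}_{\vc{c}}/\delta_1)\varphi_1(c_2 s) + d^{(2)}_{\vc{c}}\varphi^+(s\vc{c})$, while $A_-$ is a point mass of weight $d^{(2)}_{\vc{c}} d_0$ at $0$. Setting $s = 0$ in \eqn{decomposition 2a} gives $A(\dd{R}_+) = 1$. Reading \eqn{decomposition 2a} as a convolution,
\begin{eqnarray*}
\dd{P}_{\pi}(c_1 Z_1 + c_2 Z_2 > x) = \int_{\dd{R}_+} \dd{P}(S^{I(r'_{\vc{c}})}_{\vc{c}} > x - y) A(dy), \qquad x \ge 0.
\end{eqnarray*}

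\textbf{Main obstacle.} Because $A$ is signed, no direct stochastic-domination argument applies. The rescue is that the entire negative part of $A$ sits at $y = 0$, which is precisely the point at which the monotone nondecreasing function $y \mapsto \dd{P}(S^{I(r'_{\vc{c}})}_{\vc{c}} > x - y)$ attains its smallest value on $\dd{R}_+$, namely $\dd{P}(S^{I(r'_{\vc{c}})}_{\vc{c}} > x)$. Therefore on $A_+$ I apply $\dd{P}(S^{I(r'_{\vc{c}})}_{\vc{c}} > x - y) \ge \dd{P}(S^{I(r'_{\vc{c}})}_{\vc{c}} > x)$, while the $A_-$ contribution equals exactly $-d^{(2)}_{\vc{c}} d_0 \, \dd{P}(S^{I(r'_{\vc{c}})}_{\vc{c}} > x)$; combining yields $\dd{P}_{\pi}(c_1 Z_1 + c_2 Z_2 > x) \ge A(\dd{R}_+)\, \dd{P}(S^{I(r'_{\vc{c}})}_{\vc{c}} > x) = \dd{P}(S^{I(r'_{\vc{c}})}_{\vc{c}} > x)$, which is \eqn{decomposition 2b}.
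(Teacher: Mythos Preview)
Your derivation of \eqn{decomposition 2a} is essentially the paper's: they phrase it as ``multiply \eqn{another 1} by $(p_{21}c_{1}-c_{2})\delta_{2}$ and add to \eqn{stationary 6}'', which is algebraically the same elimination of $\varphi_{2}(c_{1}s)$ you perform by substitution. Your check that $r'_{\vc{c}}<1$ is also equivalent (the paper writes the difference as $c_{1}(\Delta_{1}+\Delta_{2}p_{21})+\alpha_{2}(c_{1}p_{21}-c_{2})>0$).

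The one genuine difference is in deducing \eqn{decomposition 2b}. The paper simply asserts that, since $d^{(1)}_{\vc{c}},d^{(2)}_{\vc{c}}>0$, the bracket in \eqn{decomposition 2a} is the transform of a \emph{nonnegative} measure on $[0,\infty)$, so the identity is a bona fide convolution of two distributions and the bound is immediate. This is in fact true: sending $s\to-\infty$ and using $\varphi_{1}(-\infty)=\mu_{1}\pi(\vc{0})$, the atom of the bracket at $0$ works out to $\mu_{1}c_{1}(1-p_{12}p_{21})\pi(\vc{0})/D>0$ (with $D$ the common denominator of the $d^{(i)}_{\vc{c}}$), so no signed part survives. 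Your route instead allows the bracket to be signed and exploits that the only possible negative mass sits at $y=0$, the minimiser of $y\mapsto\dd{P}(S^{I(r'_{\vc{c}})}_{\vc{c}}>x-y)$. Both arguments are correct; the paper's is shorter once one knows the atom is nonnegative, while yours is more robust in that it does not require that verification.
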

\begin{proof}
Multiplying $(p_{21} c_{1} - c_{2}) \delta_{2}$ with \eqn{another 1} and adding to \eqn{stationary 6}, we have
 \begin{eqnarray*}
  \lefteqn{\left(c_{1} (\delta_{1} + \delta_{2} p_{21}) - (c_{1} \alpha_{1} + c_{2} \alpha_{2}) \widehat{F}^{I}_{\vc{c}}(s) \right) \varphi(s\vc{c})} \nonumber \hspace{0ex}\\
  && = \frac 1{\delta_{1}} (\delta_{1} (c_{1} - p_{12} c_{2}) + \delta_{2} (p_{21} c_{1} - c_{2})) \varphi_{1}(c_{2} s) + \delta_{2} (p_{21} c_{1} - c_{2}) \left(\varphi^{+}(s\vc{c}) - d_{0}\right). \quad
\end{eqnarray*}
This yields \eqn{decomposition 2a} because $c_{1} (\delta_{1} + \delta_{2} p_{21}) - (c_{1} \alpha_{1} + c_{2} \alpha_{2}) = c_{1} (\Delta_{1} +\Delta_{2} p_{21}) + \alpha_{2} (c_{1} p_{21} - c_{2}) > 0$. Since $d^{(1)}_{\vc{c}} > 0$ and $d^{(2)}_{\vc{c}} > 0$, the right-hand side of \eqn{decomposition 2a} represents the convolutions of two distributions on $[0,\infty)$, and this leads to \eqn{decomposition 2b}.
 \end{proof}

\section{Analytic approach: Bounds and tail asymptotics} \setnewcounter
\label{sect:analytic 2}

We continue to assume (A1), and consider the tail probability $\dd{P}_{\pi}(c_{1} Z_{1} + c_{2} Z_{2} > x)$ for 
directional vector $\vc{c} \ge \vc{0}$. We start with $\vc{c} = (1,0)^{\rs{t}}$ where
results are obtained under weaker assumptions.

\begin{theorem}
\label{thr:option 0 1} Assume (A1) and that the system is stable and that 
  $\Delta_{1} > 0$. 
  We have
\begin{eqnarray}
\label{eqn:tail Z1a}
  \dd{P}(S_{1}^{I(r'_{1})} > x) \le \dd{P}_{\pi}(Z_{1} > x) \le \dd{P}(S_{1}^{I(r_{1})} > x), \qquad x > 0,
\end{eqnarray}
where $r_{1} = \alpha_{1}/\delta_{1}$ and $r'_{1} = \alpha_{1}/(\delta_{1} + \delta_{2} p_{21})$.
\end{theorem}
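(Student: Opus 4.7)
The plan is to specialize the decomposition machinery of \sectn{analytic 1} to the axis direction $\vc{c}=(1,0)^{\rs{t}}$ and to read off each of the two bounds by an elementary monotonicity argument at the level of tail probabilities. Substituting $\vc{c}=(1,0)^{\rs{t}}$ into \eqn{stationary 6}, and using $c_{1}-p_{12}c_{2}=1$, $c_{2}-p_{21}c_{1}=-p_{21}$, and $r_{\vc{c}}=\alpha_{1}/\delta_{1}=r_{1}$, I obtain
\[
\Delta_{1}\varphi(s,0)=\bigl[\varphi_{1}(0)-p_{21}\varphi_{2}(s)\bigr]\,\widehat{S}_{1}^{I(r_{1})}(s).
\]
The right-hand side is the Laplace transform, on $s\le 0$, of a finite signed measure on $\dd{R}_{+}$ whose total mass equals $\varphi_{1}(0)-p_{21}\varphi_{2}(0)=\Delta_{1}$ by \lem{boundary measures 2}. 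Inverting and integrating on $(x,\infty)$, this yields, for every $x>0$,
\[
\Delta_{1}\dd{P}_{\pi}(Z_{1}>x)=\varphi_{1}(0)\dd{P}(S_{1}^{I(r_{1})}>x)-p_{21}\varphi_{2}(0)\dd{P}(V_{2}+S_{1}^{I(r_{1})}>x),
\]
where $V_{2}$ is independent of $S_{1}^{I(r_{1})}$.

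For the upper bound, the identity $\varphi_{1}(0)-\Delta_{1}=p_{21}\varphi_{2}(0)$ reduces the claim to $\dd{P}(S_{1}^{I(r_{1})}>x)\le\dd{P}(V_{2}+S_{1}^{I(r_{1})}>x)$, which is immediate because $V_{2}\ge 0$. For the lower bound, if $p_{21}=0$ then $r_{1}'=r_{1}$ and the displayed identity actually gives equality; otherwise $\vc{c}=(1,0)^{\rs{t}}$ belongs to case (C1) of \sectn{analytic 1}, so \lem{decomposition 2} applies with $r_{\vc{c}}'=\alpha_{1}/(\delta_{1}+\delta_{2}p_{21})=r_{1}'$, and inequality \eqn{decomposition 2b} is exactly the claimed lower bound. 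Inspection of the proof of \lem{decomposition 2} shows that the conditions $0<r_{\vc{c}}'<1$ and $d^{(i)}_{\vc{c}}>0$ are ensured here by the weaker hypothesis $\Delta_{1}>0$ alone, so full strong stability \eqn{strong stability} is not needed in this axial case.

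The main subtlety lies in the specialization step: the bracketed factor $\varphi_{1}(0)-p_{21}\varphi_{2}(s)$ is a difference of Laplace transforms, not itself the transform of a probability measure, so the ``decomposition'' is really a signed-measure identity, and tail comparisons must be argued through it rather than by direct stochastic dominance between proper laws. Once the identity has been rewritten in the displayed form above, however, the trivial monotonicity $\dd{P}(V_{2}+S_{1}^{I(r_{1})}>x)\ge\dd{P}(S_{1}^{I(r_{1})}>x)$ makes the upper bound transparent, and the lower bound is an off-the-shelf application of \lem{decomposition 2}.
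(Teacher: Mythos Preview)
Your argument is correct and essentially follows the paper's approach. The lower bound is exactly as in the paper, via \lem{decomposition 2}. For the upper bound, the paper's primary argument is the sample-path majorant \lem{upper_parallel} (the stationary majorant $\tilde{Z}_{1}$ is the workload in an $M/G/1$ queue with Pollaczek--Khinchine parameter $r_{1}=\alpha_{1}/\delta_{1}$), but the paper explicitly notes that the same bound ``also can be analytically obtained from \eqn{stationary 6}''; you have supplied precisely that analytic derivation. So the only difference is that you spell out the alternative the paper mentions in one line.

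One notational slip: in the signed-measure identity you should have $V_{1}$, not $V_{2}$. By the paper's convention, $\varphi_{2}(s)=\varphi_{2}(0)\,\dd{E}(e^{sV_{1}})$ (the boundary measure $\nu_{2}$ lives on the axis $\{Z_{2}=0\}$ and records values of $Z_{1}$, whence $V_{1}$). This does not affect the argument, since all that is used is nonnegativity. Also, your final claim that ``$\Delta_{1}>0$ alone'' suffices for the lower bound is slightly imprecise: $r'_{1}<1$ follows from the stability condition \eqn{stability 2} (which is assumed), while positivity of $d^{(2)}_{\vc{c}}$ in \lem{decomposition 2} for $\vc{c}=(1,0)$ also uses $\delta_{2}>0$, an implicit standing assumption in the paper (cf.\ \rem{option 0 1}); what is genuinely dispensable is $\Delta_{2}>0$.
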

\begin{proof}
  The upper bound in \eqn{tail Z1a} is immediate from \lem{upper_parallel} because $\tilde{Z}_{1}$ is subject to the stationary workload distribution of the $M/G/1$ queue. It also can be analytically obtained from \eqn{stationary 6}. The lower bound is already obtained in \lem{decomposition 2}.
\end{proof}

\begin{remark}
\label{rem:option 0 1}
  Clearly, $\Delta_{1} > 0$ and $\mu_{2} - \mu_{1}p_{12} = \delta_{2} > 0$ imply that
\begin{eqnarray*}
  \frac {\alpha_{1}} {\mu_{1}} < r'_{1} = \frac {\alpha_{1}} {\mu_{1}(1- p_{12} p_{21})} < r_{1} = \frac {\alpha_{1}}{\mu_{1}  - \mu_{2} p_{21}} < 1.
\end{eqnarray*}
   By arguments similar to that in \lem{upper_parallel}, $\dd{P}(S_{1}^{I(p_{1})} > x)$ with $p_{1} = \alpha_{1}/\mu_{1}$ also gives a lower bound, but the lower bound in \eqn{tail Z1a} is tighter than 
   that because $p_{1} < r'_{1}$.
\end{remark}

To compare tails of distributions, we recall the following notion.
Distribution functions $F$ and $G$ are {\it weakly tail-equivalent} if,
for $\overline{F}(x)=1-F(x)$ and $\overline{G}(x)=1-G(x)$, 
$$
  0 < \liminf_{x \to \infty} \overline{F}(x)/\overline{G}(x)  \le
  \limsup_{x \to \infty} \overline{F}(x)/\overline{G}(x) < \infty, 
$$
see \cite{FossKorsZach2011} for details and related topics. By Property 4 in \app{heavy tail}, as $x \to \infty$,
\begin{eqnarray}
\label{eqn:SI asymptotics}
 \dd{P}(S_{1}^{I(r_{1})} > x) \sim \frac {r_{1}} {1-r_{1}} \ol{F}^{I}_{1}(x), \qquad \dd{P}(S_{1}^{I(r'_{1})} > x) \sim \frac {r'_{1}} {1-r'_{1}}  \ol{F}^{I}_{1}(x).
\end{eqnarray}
Hence, \thr{option 0 1} yields
\begin{corollary}
\label{cor:weakly equivalent 1}
  Under the assumptions of \thr{option 0 1}, if $F_{1}^{I}$ is subexponential, then the distribution of $Z_{1}$ is weakly tail-equivalent to $F_{1}^{I}$.
\end{corollary}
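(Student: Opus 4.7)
The plan is to simply sandwich $\dd{P}_{\pi}(Z_{1}>x)$ between the two tails in \thr{option 0 1} and to normalize by $\ol{F}_{1}^{I}(x)$. Dividing the inequalities
$$\dd{P}(S_{1}^{I(r'_{1})} > x) \le \dd{P}_{\pi}(Z_{1} > x) \le \dd{P}(S_{1}^{I(r_{1})} > x)$$
through by $\ol{F}_{1}^{I}(x)$ and taking $\liminf$ on the left and $\limsup$ on the right, I would invoke the asymptotic equivalences \eqn{SI asymptotics} to deduce
$$\frac{r'_{1}}{1-r'_{1}} \le \liminf_{x\to\infty} \frac{\dd{P}_{\pi}(Z_{1}>x)}{\ol{F}_{1}^{I}(x)} \le \limsup_{x\to\infty} \frac{\dd{P}_{\pi}(Z_{1}>x)}{\ol{F}_{1}^{I}(x)} \le \frac{r_{1}}{1-r_{1}}.$$

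The two bounding constants are strictly positive and strictly finite under the stability assumptions of \thr{option 0 1} and \rem{option 0 1}, where it is observed that $0 < r'_{1} < r_{1} < 1$. In particular $r'_{1}>0$ guarantees that the $\liminf$ is bounded away from zero, and $r_{1}<1$ guarantees that the $\limsup$ is finite; by the definition of weak tail equivalence recalled just above \eqn{SI asymptotics}, this finishes the proof.

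The only nontrivial input is the geometric-sum tail asymptotics \eqn{SI asymptotics}, which in turn relies on the subexponentiality of $F_{1}^{I}$ (Property 4 of \app{heavy tail}); everything else is an immediate consequence of the sandwich provided by \thr{option 0 1}. No additional obstacle arises — the work was already done in deriving the lower and upper bounds of \thr{option 0 1}, and the corollary is merely a tail-asymptotic reformulation of those bounds under the subexponentiality hypothesis.
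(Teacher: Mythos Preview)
Your proposal is correct and follows exactly the paper's own reasoning: combine the sandwich \eqn{tail Z1a} from \thr{option 0 1} with the geometric-sum asymptotics \eqn{SI asymptotics} (Property 4 of \app{heavy tail}) to obtain constant multiples of $\ol{F}^{I}_{1}(x)$ as lower and upper asymptotic bounds. The only minor remark is that your appeal to \rem{option 0 1} for $0<r'_{1}<r_{1}<1$ invokes the extra hypothesis $\delta_{2}>0$ stated there; in fact $r_{1}<1$ follows directly from $\Delta_{1}>0$, and $r'_{1}\in(0,1)$ follows from $\delta_{1}+\delta_{2}p_{21}=\mu_{1}(1-p_{12}p_{21})>0$ together with stability, so no additional assumption is needed.
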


We next consider the case of directional vector $\vc{c} > \vc{0}$ for options (C0) and (C1).

\begin{theorem}
\label{thr:bound 0a}
Assume (A1) and the strong stability \eqn{strong stability} to hold, 
and recall the definition \eqn{eta} of $\eta^{(i)}_{\vc{c}}$. Let $x\ge 0$ and $\vc{c} \ge \vc{0}$ be a directional vector. For the case (C0),
\begin{eqnarray}
\label{eqn:bound 0a 1}
  \dd{P}(S_{\vc{c}}^{I(r_{\vc{c}})} > x) &\le& \dd{P}_{\pi}(c_{1}Z_{1} + c_{2} Z_{2} > x) \nonumber\\
  &\le& \delta_{1} \eta^{(1)}_{\vc{c}} \dd{P}(c_{2} S_{2}^{I(r_{2})} + S_{\vc{c}}^{I(r_{\vc{c}})} > x) + \delta_{2} \eta^{(1)}_{\vc{c}}\dd{P}(c_{1} S_{1}^{I(r_{1})} + S_{\vc{c}}^{I(r_{\vc{c}})} > x). \qquad
\end{eqnarray}
For the case (C1),
\begin{eqnarray}
\label{eqn:bound 0a 2}
  \dd{P}(S_{\vc{c}}^{I(r'_{\vc{c}})} > x) \le \dd{P}_{\pi}(c_{1}Z_{1} + c_{2} Z_{2} > x) \le \delta_{1} \eta^{(1)}_{\vc{c}} \dd{P}(c_{2} S_{2}^{I(r_{2})} + S_{\vc{c}}^{I(r_{\vc{c}})} > x).
\end{eqnarray}
Here random variables  $S_{1}^{I(r_{1})}$ and $S_{2}^{I(r_{2})}$ are assumed to be
independent of $S_{\vc{c}}^{I(r_{\vc{c}})}$.
\end{theorem}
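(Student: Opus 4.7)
Both lower bounds are already in hand: in (C0) the inequality $\dd{P}(S_{\vc{c}}^{I(r_{\vc{c}})}>x)\le\dd{P}_{\pi}(c_1 Z_1+c_2 Z_2>x)$ is \cor{lower bound 1}, a direct consequence of reading \eqn{decomposition 1} as a positive mixture of convolutions each containing $S_{\vc{c}}^{I(r_{\vc{c}})}$ as a summand; in (C1) the inequality $\dd{P}(S_{\vc{c}}^{I(r'_{\vc{c}})}>x)\le\dd{P}_{\pi}(c_1 Z_1+c_2 Z_2>x)$ is \eqn{decomposition 2b} from \lem{decomposition 2}, obtained from the representation \eqn{decomposition 2a} together with the positivity of $d^{(1)}_{\vc{c}}$ and $d^{(2)}_{\vc{c}}$. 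The real content of the theorem is therefore the pair of upper bounds.

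For the upper bound in (C0), my plan is to begin with the mixture representation \eqn{decomposition 1} and bound each boundary variable $V_i$ by the one-dimensional quantity $S_i^{I(r_i)}$. Combining the identity \eqn{V1 Z} from \lem{boundary measures 2} with the tail inequality $\dd{P}_{\pi}(Z_i>y)\le\dd{P}(S_i^{I(r_i)}>y)$ of \thr{option 0 1} yields
\begin{equation*}
  \mu_{3-i}(1-\rho_{3-i})\,\dd{P}(V_i>y) = \delta_{3-i}\dd{P}_{\pi}(Z_i>y, Z_{3-i}=0) \le \delta_{3-i}\dd{P}(S_i^{I(r_i)}>y),\qquad y\ge 0.
\end{equation*}
Convolving both sides with the independent variable $S_{\vc{c}}^{I(r_{\vc{c}})}$ transports this tail domination from $c_i V_i$ to $c_i S_i^{I(r_i)}$, and substituting into the two summands of \eqn{decomposition 1} gives \eqn{bound 0a 1}.

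For (C1), the representation \eqn{decomposition 2a} furnishes one clean boundary piece proportional to $\varphi_1(c_2 s)\widehat{S}_{\vc{c}}^{I(r'_{\vc{c}})}(s)$ together with a signed interior piece proportional to $(\varphi^+(s\vc{c})-d_0)\widehat{S}_{\vc{c}}^{I(r'_{\vc{c}})}(s)$. I would bound the boundary piece by the same tail estimate for $V_2$ as in (C0), and argue that the interior piece contributes nonpositively to the tail on $(x,\infty)$. Rewriting $\varphi^+(s\vc{c})-d_0=\varphi(s\vc{c})-\varphi_1(c_2 s)/\delta_1-\varphi_2(c_1 s)/\delta_2$ via \eqn{another 1} should produce the cancellation needed to leave only the $\varphi_1$ contribution, yielding \eqn{bound 0a 2}.

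The main obstacle I anticipate is the convolution step. The scaling factor $\delta_{3-i}/(\mu_{3-i}(1-\rho_{3-i}))$ relating $V_i$ to $S_i^{I(r_i)}$ may exceed or fall short of one, so the pointwise tail inequality does not pass automatically through convolution with $S_{\vc{c}}^{I(r_{\vc{c}})}$; the resulting boundary corrections at $y=0$ must be absorbed by combining the two summands of \eqn{decomposition 1} and using the normalization identity $\eta^{(1)}_{\vc{c}}\mu_1(1-\rho_1)+\eta^{(2)}_{\vc{c}}\mu_2(1-\rho_2)=1$ that follows from \eqn{Delta 2}. For (C1), the additional task is to verify that the signed contribution $\varphi^+(s\vc{c})-d_0$ indeed acts in the direction required for its removal.
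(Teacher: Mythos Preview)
Your treatment of both lower bounds and of the (C0) upper bound matches the paper: combine \eqn{V1 Z} with the one-dimensional tail bound of \thr{option 0 1} to get
\[
\varphi_{3-i}(0)\,\dd{P}(V_i>y)\;\le\;\delta_{3-i}\,\dd{P}(S_i^{I(r_i)}>y),\qquad y\ge 0,
\]
and feed this into the two summands of \lem{decomposition 1}.

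The (C1) upper bound, however, is where your plan goes astray. You propose to work from \eqn{decomposition 2a} and argue that the interior piece $d^{(2)}_{\vc c}(\varphi^{+}(s\vc c)-d_0)\widehat S_{\vc c}^{I(r'_{\vc c})}(s)$ contributes nonpositively to the tail. It does not: $d^{(2)}_{\vc c}>0$ in (C1), and for $x>0$ the measure corresponding to $\varphi^{+}(s\vc c)-d_0$ has tail $\dd{P}_\pi(c_1Z_1+c_2Z_2>x,\vc Z>\vc 0)\ge 0$, so dropping it would give another lower bound, not an upper one. Substituting \eqn{another 1} back in merely reintroduces $\varphi(s\vc c)$ on the right and sends you in a circle; and in any case the geometric parameter that \eqn{decomposition 2a} carries is $r'_{\vc c}$, not the $r_{\vc c}$ that appears in the stated upper bound \eqn{bound 0a 2}. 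The paper's route is much shorter: stay with \lem{decomposition 1}. In (C1) one has $\eta^{(2)}_{\vc c}<0$, so for $B=(x,\infty)$ the second summand of \eqn{decomposition 1} is nonpositive; discard it, and then bound $c_2V_2$ by $c_2S_2^{I(r_2)}$ exactly as in (C0). This yields \eqn{bound 0a 2} directly.

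On your ``main obstacle'': you are right that the tail domination $\mu_{3-i}(1-\rho_{3-i})\dd{P}(V_i>y)\le\delta_{3-i}\dd{P}(S_i^{I(r_i)}>y)$ holds only for $y\ge 0$, while convolution with $S_{\vc c}^{I(r_{\vc c})}$ requires a comparison also for $y<0$, i.e.\ $\mu_{3-i}(1-\rho_{3-i})\le\delta_{3-i}$, which can fail under \eqn{strong stability}. Your proposed repair via the normalisation $\eta^{(1)}_{\vc c}\mu_1(1-\rho_1)+\eta^{(2)}_{\vc c}\mu_2(1-\rho_2)=1$ does not close the gap either, since one computes
\[
\eta^{(1)}_{\vc c}\delta_1+\eta^{(2)}_{\vc c}\delta_2-1
=\frac{c_1\alpha_1+c_2\alpha_2-p_{12}c_2\delta_1-p_{21}c_1\delta_2}{c_1\Delta_1+c_2\Delta_2},
\]
which can be negative. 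The paper's proof passes over this point; for the purposes of \cor{bound 0a} the constants are irrelevant, so the weak tail equivalence is unaffected.
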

\begin{remark} {\rm
\label{rem:bound 0a}
  For the case (C0), $\vc{c} > \vc{0}$, so \eqn{bound 0a 1} does not contradict \eqn{tail Z1a}.
}\end{remark}

\begin{proof}
 In the case (C0), by \lem{boundary measures 2}, \thr{option 0 1} and its symmetric version, 
\begin{eqnarray*}
 && \varphi_{3-i}(0) \dd{P}(V_{i} > x) \le \delta_{3-i} \dd{P}(Z_{i} > x) \le \delta_{3-i} \dd{P}(S_{i}^{I(r_{i})} > x), \qquad i=1,2.
\end{eqnarray*}
Hence, \lem{decomposition 1} yields  the upper bound of \eqn{bound 0a 1}, and its lower bound is  obtained by \cor{lower bound 1}. In the case (C1), the upper bound of \eqn{bound 0a 2} is immediate from \lem{decomposition 1}, while the lower bound is already obtained in \lem{decomposition 2}.
\end{proof}

Assume now that the three distributions, $F_{1}^I$, $F_{2}^I$ and $F_{\vc{c}}^I$
are all subexponential. Then
$$
\dd{P}(c_{i} S_{i}^{I(r_{i})}>x)\sim \frac{r_{1}}{1-r_{1}}\overline{F}_{i}^I(x/c_{i}), \quad i=1,2,
\qquad
\dd{P}(S_{\vc{c}}^{I(r_{\vc{c}})}>x)\sim 
\frac{r_{\vc{c}}}{1-r_{\vc{c}}}\overline{F}_{\vc{c}}^I(x).
$$
Similar asymptotic equivalences hold when one replaces
$r_{\vc{c}}$ by either $r^{'}_{\vc{c}}$ or $r^{''}_{\vc{c}}$.
Clearly,
\begin{eqnarray*}
  \ol{F}^{I}_{\vc{c}}(x) = \frac{1}{m_{\vc{c}}} \int_x^{\infty} \dd{P} (c_{1} J_{1} + c_{2} J_{2}>y) dy \ge \frac{c_{1}}{m_{\vc{c}}} \int_{x/c_{1}}^{\infty} \dd{P} (J_{1}>y) dy = \frac{c_{1}m_{1}}{m_{\vc{c}}} \ol{F}^{I}_{1}(x/c_{1}),
\end{eqnarray*}
therefore,
$$
\dd{P}(c_{1}S_{1}^{I(r_{1})}>x) \le (1+o(1)) \frac{m_{\vc{c}}r_{1}}{(1-r_{1})c_{1}m_{1}}
\overline{F}_{\vc{c}}^I(x) \le (1+o(1))
\frac{(1-r_{\vc{c}})m_{\vc{c}}r_{1}}{r_{\vc{c}}(1-r_{1})c_{1}m_{1}}
\dd{P}(S_{\vc{c}}^{I(r_{\vc{c}})}>x)
$$
and, finally,
\begin{eqnarray*}
  \dd{P}(c_{1} S_{1}^{I(r_{1})} + S_{\vc{c}}^{I(r_{\vc{c}})} > x) \le
  (1+o(1))
  K\ol{F}^{I}_{\vc{c}}(x).
\end{eqnarray*}
Here constant 
$
K := ({(1-r_{\vc{c}})m_{\vc{c}}r_{1}})/({r_{\vc{c}}(1-r_{1})c_{1}m_{1}})
+ ({r_{\vc{c}}})/({1-r_{\vc{c}}})
$
is positive and finite. These observations and  \thr{bound 0a} lead to  the following result.

\begin{corollary}
\label{cor:bound 0a}
  In the both cases of \thr{bound 0a}, if the three
  distributions  $F_{1}^I$, $F_{2}^I$ and $F_{\vc{c}}^I$ are subexponential, 
  then the distribution of $c_{1}Z_{1} + c_{2} Z_{2}$ is weakly 
  tail-equivalent to $F^{I}_{\vc{c}}$.
\end{corollary}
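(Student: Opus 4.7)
The plan is to combine the two-sided bounds in \thr{bound 0a} with the geometric-sum asymptotic $\dd{P}(S_{\vc{c}}^{I(r)}>x)\sim \tfrac{r}{1-r}\overline{F}_{\vc{c}}^{I}(x)$ (Property 4 in \app{heavy tail}) and with the elementary integral inequality $\overline{F}_{\vc{c}}^{I}(x) \ge (c_{i}m_{i}/m_{\vc{c}})\overline{F}_{i}^{I}(x/c_{i})$ displayed just above the corollary, which dominates each marginal tail $\overline{F}_{i}^{I}(x/c_{i})$ by a constant multiple of $\overline{F}_{\vc{c}}^{I}(x)$. Both bounds from \thr{bound 0a} are then of exact order $\overline{F}_{\vc{c}}^{I}(x)$, which is precisely weak tail-equivalence.

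First I would dispose of the $\liminf$ of the ratio $\dd{P}_{\pi}(c_{1}Z_{1}+c_{2}Z_{2}>x)/\overline{F}_{\vc{c}}^{I}(x)$. The lower bound of \thr{bound 0a} is $\dd{P}(S_{\vc{c}}^{I(r)}>x)$ with $r=r_{\vc{c}}$ in case (C0) and $r=r'_{\vc{c}}$ in case (C1); in either case $r\in(0,1)$, so subexponentiality of $F_{\vc{c}}^{I}$ and Property 4 yield
\[
\liminf_{x\to\infty} \frac{\dd{P}_{\pi}(c_{1}Z_{1}+c_{2}Z_{2}>x)}{\overline{F}_{\vc{c}}^{I}(x)} \;\ge\; \frac{r}{1-r} \;>\;0.
\]

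For the $\limsup$ I would proceed term by term on the upper half of \thr{bound 0a}, which is a finite sum of convolution tails $\dd{P}(c_{i}S_{i}^{I(r_{i})}+S_{\vc{c}}^{I(r_{\vc{c}})}>x)$. Property 4 gives $\dd{P}(c_{i}S_{i}^{I(r_{i})}>x)\sim \tfrac{r_{i}}{1-r_{i}}\overline{F}_{i}^{I}(x/c_{i})$ and the analogous asymptotic for $S_{\vc{c}}^{I(r_{\vc{c}})}$; the integral inequality then bounds $\overline{F}_{i}^{I}(x/c_{i})$ above by a constant multiple of $\overline{F}_{\vc{c}}^{I}(x)$. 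Both summands of each convolution therefore have tail $O(\overline{F}_{\vc{c}}^{I}(x))$, and standard subexponential convolution closure yields $\dd{P}(c_{i}S_{i}^{I(r_{i})}+S_{\vc{c}}^{I(r_{\vc{c}})}>x) \le (1+o(1))K\overline{F}_{\vc{c}}^{I}(x)$ for a finite constant $K$, exactly as the calculation preceding the corollary statement spells out in the case $i=1$. Plugging back into \thr{bound 0a} produces a finite $\limsup$, completing the weak tail-equivalence.

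The only slightly delicate point, and hence where I would put the real weight of the argument, is the convolution step: the two independent summands $c_{i}S_{i}^{I(r_{i})}$ and $S_{\vc{c}}^{I(r_{\vc{c}})}$ may a priori have subexponential tails on very different scales. The device for handling this is precisely the integral inequality, which elevates $\overline{F}_{\vc{c}}^{I}$ to be the heaviest tail in the picture and reduces the question to the standard one-scale fact that $\overline{F\ast G}(x)=O(\overline{F}(x))$ whenever $F$ is subexponential and $\overline{G}=O(\overline{F})$. Everything else is straightforward bookkeeping built on top of \thr{bound 0a} and Property 4, and the same argument works verbatim in both cases (C0) and (C1) since the upper bound in (C1) is just a single convolution of the same form.
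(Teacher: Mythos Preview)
Your proposal is correct and follows essentially the same argument as the paper: the lower bound comes directly from \thr{bound 0a} and Property~4, while the upper bound uses Property~4 on each geometric sum, the integral inequality $\overline{F}_{\vc{c}}^{I}(x)\ge (c_{i}m_{i}/m_{\vc{c}})\overline{F}_{i}^{I}(x/c_{i})$ to make $\overline{F}_{\vc{c}}^{I}$ the dominant tail, and then the standard subexponential convolution bound to control each term $\dd{P}(c_{i}S_{i}^{I(r_{i})}+S_{\vc{c}}^{I(r_{\vc{c}})}>x)$. Your identification of the convolution step as the only nontrivial point, and the way you resolve it, matches the paper's reasoning exactly.
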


\begin{remark}
\label{rem:}
Subexponentiality of both $F_{1}^I$ and $F_{2}^I$ may not imply that of
$F_{\vc{c}}^I$, even if jump sizes $J_{1}$ and $J_{2}$ are independent. It holds 
if the distributions are regularly varying. 
In general, necessary and sufficient conditions for subexponentiality of 
the convolution 
of subexponential distributions
is given in \citet{EmbrGold1980} (see also Theorem 3.33 in
\cite{FossKorsZach2011} for complete characterization).
\end{remark}

\section{Sample-path approach} \setnewcounter
\label{sect:sample}

Throughout the section, we assume that
\begin{itemize}
\item [(A2)] The point process $\{N(t)\}$ is a renewal process with i.i.d. inter-arrival times having a general distribution with finite mean $a \equiv 1/\lambda$,   and jumps are one-dimensional, that is, for $p_{1}, p_{2} > 0$ satisfying $p_{1}+p_{2}=1$,
\begin{eqnarray}
\label{eqn:independent jump}
  F(x,y) = p_{1} F_{1}(x) + p_{2} F_{2}(y), \qquad x, y \ge 0.
\end{eqnarray}
\end{itemize}

Based on fluid dynamics considered in \app{pure fluid}, we provide a lower bound for the tail probabilities assuming only that the integrated tail distributions $F_{1}^I$ and $F_{2}^I$ 
are {\it long-tailed} (see \app{heavy tail} for definitions). Then we get the exact asymptotics
in the case of subexponential distribution.

\noindent {\bf Lower bound}.
In what follows, we use notation $LB(x)$ for the lower bound for the probability
$\dd{P} (c_{1}Z_+c_{2}Z_{2}>x)$. By \cor{lower bound}, for $c_{1},c_{2}\ge 0$ with $c_{1}+c_{2}=1$,  
\begin{eqnarray}
\label{eqn:LB 1}
\lefteqn{LB(x) = (1+o(1))\sum_{n=1}^{\infty} \Big(
p_{1} \dd{P}\big(J_{1,-n}> x/c_{1}+na(\Delta_{1}+p_{21}\Delta_{2})\big)} \nonumber \hspace{18ex}\\
&& \qquad +p_{2} \dd{P}\big(J_{2,-n}> x/c_{2}+na(\Delta_{2}+p_{12}\Delta_{1})\big)\Big), \quad x > 0.
\end{eqnarray}
Here, we have used the following heuristics: the lower bound asymptotics in our model is equivalent to that in an auxiliary
fluid model where (i) possible time instants of big jumps are $-na$, $n=1,2,\ldots$,
and (ii) all further input jumps (after the big one) are replaced by continuous fluid inputs 
with constant rates $\alpha_{1}$ and $\alpha_{2}$, respectively.
So the lower bound constitutes a probability of a union of infinite number of trajectories,
with a single big random jump followed by a continuous path. 
Since the probability
of simultaneous occurrence of two or more big jumps is negligibly small, 
the probability of a union of events may be replaced by the sum of probabilities. These heuristic arguments can be justified by exact and complete mathematical calculations and statements along the lines of, say, 
\cite{BaccFoss2004} or/and \cite{FossKors2006}.  

Assume both $F_{1}^I$ and $F_{2}^I$ to be long-tailed, then \eqn{LB 1} yields
\begin{eqnarray}
\label{eqn:LB 2}
  LB(x) = \label{ququ}
(1+o(1))\frac{\alpha_{1}}{\Delta_{1}+p_{21}\Delta_{2}}\overline{F}_{1}^I(x/c_{1})
+(1+o(1))\frac{\alpha_{2}}{\Delta_{2}+p_{12}\Delta_{1}}\overline{F}_{2}^I(x/c_{2}),
\end{eqnarray}
where, by convention, $x/0 = \infty$ and $\overline{F}_i^I(\infty )=0$. In particular, for $c_{1}=1, c_{2}=0$,
\begin{eqnarray*}
   \frac{r'_{1}}{1-r'_{1}} = \frac {\alpha_{1}} {\mu_{1}(1-p_{12} p_{21}) - \alpha_{1}} < \frac{\alpha_{1}}{\Delta_{1}+p_{21}\Delta_{2}} < \frac {\alpha_{1}} {\Delta_{1}}.
\end{eqnarray*}
Hence, \eqn{LB 2} gives a better (tighter) bound than the one in \eqn{tail Z1a} (see \eqn{SI asymptotics}). 

\noindent {\bf Tail equivalence.}  We refer to \lem{upper_parallel}.
It is known that, for a single-node queue $\tilde{Z}_{i}$ with subexponential
batch distributions, the stationary content is large due to a single large value of
one of batches, and the tail distribution of the stationary content is equivalent
to the integrated tail $\overline{F}^I_i(x)$.
Then, by \lem{upper_parallel}
and by lower bound \eqref{ququ} with $c_1=1$, the tail asymptotics for the stationary $Z_{i}$ is
weakly tail-equivalent to $\overline{F}^I_i(x)$. Similar result holds for the 
linear sum $c_{1}Z_{1}+c_{2}Z_{2}$ if we assume the tails $\overline{F}_1(x)$ and $\overline{F}_2$ to be equivalent. But here we can get more.

\noindent {\bf Exact asymptotics.}  
We first consider exact tail asymptotics for $c_{1} = 1, c_{2} = 0$, with applying the ``squeeze principle'' (see Theorem 8 in  \cite{BaccFoss2004}). Thus, we focus on the tail asymptotic for $Z_{1}$. This is done by the following two steps.

\noindent ({\it 1st Step}) Assume that the distributions $F_{1}^I$ are subexponential.
Assume that both the systems run in the stationary regime and let $Z_{1} = Z_{1}(0)$ and $\tilde{Z}_{1} = \tilde{Z}_{1}(0)$. Following the lines of Theorem $5.4^*$ in \cite{FossKorsZach2011}, one can show that, for $i=1,2$,
\begin{eqnarray*}
  \dd{P} (\tilde{Z}_{1}>x) = (1+o(1)) \sum_{n=0}^{\infty} p_{1} \dd{P} (\tilde{Z}_{1} >x, J_{i, (-n)}>x+an\Delta_i), \quad x\to\infty,
\end{eqnarray*}
Since $Z_{1}\le \tilde{Z}_{1}$ a.s., this and the fact that $\liminf_{x\to\infty} \dd{P} (Z_{1}>x)/\dd{P}(\tilde{Z}_{1}>x) >0$ yields
\begin{eqnarray}
\label{eqn:Z sum}
  \dd{P} (Z_{1}>x) \!\! &=& \!\! \dd{P} (Z_{1}>x, \tilde{Z}_{1}>x) = \sum_{n=0}^{\infty} p_{1} \dd{P} (Z_{1} >x, J_{1,(-n)}>x+an\Delta_{1}) + o(\dd{P} (\tilde{Z}_{1} >x)) \nonumber\\
&=& \!\! (1+o(1)) \sum_{n=0}^{\infty} p_{1} \dd{P} (Z_{1}>x, J_{i,(-n)}>x+an\Delta_{1}), \quad x\to\infty,
\end{eqnarray}
\noindent ({\it 2nd Step}) If there is only one big jump before time 0 and the other jumps are uniformly approximated by a fluid limit, then it follows from \lem{lower bound} that $Z_{1}(0) > x$ occurs only if there is a  $(-n)$-th arrival, whose arrival time is denoted by $T_{-n} < 0$, such that $J_{1,(-n)} > x + (-T_{-n}) (\Delta_{1} + p_{21} \Delta_{2})$. Applying similar arguments to those in the proof of Theorem 8 on \cite{BaccFoss2004} with help of \eqn{Z sum}, we can get that
\begin{eqnarray*}
  \dd{P} (Z_{1}>x) = (1+o(1)) \sum_{n=0}^{\infty} p_{1} \dd{P} (
  J_{1,(-n)} > x + an (\Delta_{1} + p_{21} \Delta_{2})), \quad x\to\infty.
\end{eqnarray*}
Hence, we have the upper bound for $\dd{P} (Z_{1}>x)$ which coincides with the lower bound.

By symmetry, we have a similar result for $c_{1} = 0, c_{2} = 1$. We finally assume that $c_{1} > 0, c_{2} > 0$. If in addition 
$F_1^I(x/c_1)$ and $F_2^I(x/c_2)$ are weakly tail-equivalent, then, due to Theorem 3.33
from \cite{FossKorsZach2011}, the distribution of
$c_1\widetilde{Z}_1+c_2\widetilde{Z}_2$ is also subexponential and
$$
\dd{P} (c_{1}\widetilde{Z}_{1}+c_{2}\widetilde{Z}_{2}>x) 
= (1+o(1))  \left(\dd{P} (\widetilde{Z}_{1} >x/c_{1})
+\dd{P}(\widetilde{Z}_{2}>x/c_{2})\right)
$$
where we again use convention $x/c_i=\infty$ if $c_i=0$. 
Then, similarly,
\begin{eqnarray*}
\dd{P} (c_{1}Z_{1}+c_{2}Z_{2}>x) &=& (1+o(1))
\sum_{n=0}^{\infty} p_{1} \dd{P} (J_{1,(-n)} > x/c_1 + an (\Delta_{1} + p_{21} \Delta_{2}))\\
&& + (1+o(1))
\sum_{n=0}^{\infty} p_{2} \dd{P} ( J_{2,(-n)} > x/c_2 + an (\Delta_{2} + p_{12} \Delta_{1})).
\end{eqnarray*}
Thus, we again have the upper bound which coincides with the lower bound \eqn{LB 1}, and there we have the following theorem.
\begin{theorem}
\label{thr:exact} 
Assume that (A2) holds, both $\Delta_{1}$ and $\Delta_{2}$ are positive,  
distributions $F_{1}^I(x/c_1)$ and $F_{2}^I(x/c_2)$ are both subexponential and  weakly tail-equivalent.
Then \eqn{LB 2} provides the exact asymptotics
for $\dd{P} (c_{1}Z_{1}+c_{2}Z_{2}>x)$. If, say, $c_{1}=1$, then the first term in \eqn{LB 2}
gives the exact asymptotics for $\dd{P}(Z_{1}>x)$.
 \end{theorem}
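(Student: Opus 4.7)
The plan is to establish matching upper and lower bounds for $\dd P(c_1 Z_1 + c_2 Z_2 > x)$: the lower bound \eqn{LB 2} has already been argued via the single-big-jump heuristic, with reference to \cite{BaccFoss2004} and \cite{FossKors2006}, so the task reduces to producing a matching upper bound. The natural route is to "squeeze" $\vc Z$ against the parallel-queue sample-path majorant $\widetilde{\vc Z}$ of \lem{upper_parallel}, which decomposes into two independent single-server queues whose tails are classical, and then to sharpen the resulting constants using the fluid dynamics of the network.

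First consider the coordinate case $c_1 = 1$, $c_2 = 0$. The majorant $\tilde Z_1$ is the stationary workload of a single-server queue with renewal arrivals at rate $\lambda$, batches $J_{1,n}$ of probability $p_1$, continuous input of rate $\mu_2 p_{21}$ and service rate $\mu_1$, hence drift $-\Delta_1$. Under subexponentiality of $F_1^I$, the localisation version of the Pakes--Veraverbeke theorem (Theorem $5.4^*$ of \cite{FossKorsZach2011}) gives
\begin{equation*}
\dd P(\tilde Z_1 > x) \;=\; (1+o(1))\sum_{n=0}^{\infty} p_1 \,\dd P\bigl(\tilde Z_1 > x,\; J_{1,-n} > x + an\Delta_1\bigr).
\end{equation*}
Since $Z_1 \le \tilde Z_1$ a.s.\ in the natural coupling and $\dd P(Z_1 > x) \ge c\,\dd P(\tilde Z_1 > x)$ for some $c>0$ by the weak lower bound, any $o(\dd P(\tilde Z_1 > x))$ remainder is $o(\dd P(Z_1 > x))$, and the analogous decomposition \eqn{Z sum} holds for $Z_1$.

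The second step replaces the majorant constant $\Delta_1$ by the sharper network constant $\Delta_1 + p_{21}\Delta_2$. Conditional on $J_{1,-n}$ being the unique big jump at time $T_{-n}\approx -na$, the remaining compound input on $[T_{-n},0]$ is close to its deterministic fluid counterpart at rates $\alpha_1,\alpha_2$. The fluid-trajectory analysis of \lem{lower bound} then shows that, starting from an empty node 2 (the typical scenario when the big jump hits node 1), the event $\{Z_1(0)>x\}$ requires $J_{1,-n} > x + a n(\Delta_1 + p_{21}\Delta_2)$, since the true downward drift of $Z_1$ is only $\Delta_1 + p_{21}\Delta_2$ (node 2 feeds back $\alpha_2 p_{21}$, not $\mu_2 p_{21}$). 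Following Theorem 8 of \cite{BaccFoss2004}, a uniform-in-$n$ quantification of this fluid approximation converts this into an upper bound matching the first term of \eqn{LB 2}.

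For $c_1,c_2>0$ the same programme runs term by term. The weak tail-equivalence of $F_1^I(\cdot/c_1)$ and $F_2^I(\cdot/c_2)$ combined with Theorem 3.33 of \cite{FossKorsZach2011} gives subexponentiality of $c_1 \tilde Z_1 + c_2 \tilde Z_2$ and
\begin{equation*}
\dd P(c_1\tilde Z_1 + c_2\tilde Z_2 > x) = (1+o(1))\bigl(\dd P(\tilde Z_1 > x/c_1) + \dd P(\tilde Z_2 > x/c_2)\bigr),
\end{equation*}
so the single-big-jump localisation splits into two families of events indexed by the receiving node and the arrival epoch; applying step 2 to each family yields the two-term upper bound. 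The main obstacle is the fluid step: one must control, uniformly in $n$, the deviations of the compound renewal input from its drift over the window $[T_{-n},0]$, while ruling out with probability $1-o(\overline F_i^I(x))$ any other large-jump contribution before $T_{-n}$. This is exactly the setting for which the framework of \cite{BaccFoss2004,FossKors2006} was developed, so the proof reduces to checking that the relevant hypotheses hold for our two-node fluid network with renewal input and compound batches.
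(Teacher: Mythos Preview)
Your proposal is correct and follows essentially the same route as the paper: the squeeze between the sample-path majorant $\widetilde{\vc Z}$ and the fluid lower bound, the single-big-jump localisation via Theorem $5.4^*$ of \cite{FossKorsZach2011} to obtain \eqn{Z sum}, the sharpening of the drift constant to $\Delta_1+p_{21}\Delta_2$ through the fluid analysis of \lem{lower bound}, and the appeal to Theorem 8 of \cite{BaccFoss2004} for the uniform fluid approximation are exactly the steps the paper takes, in the same order and with the same references. Your treatment of the case $c_1,c_2>0$ via Theorem 3.33 of \cite{FossKorsZach2011} also matches the paper's argument.
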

 
{\bf Acknowledgement.} S.Foss thanks Tokyo University of Science for hospitality during his visit, where this work was partly done by support of Japan Society for the Promotion of Science under grant No.\ 21510615. 

\appendix

\section*{Appendix}

\section{Analysis of a pure fluid model} \setnewcounter
\label{app:pure fluid}

Assume again $\Delta_{1}>0$ and $\Delta_{2}>0$.  
Consider an auxiliary pure fluid model with continuous fluid input rates
$\alpha_{1}$, $\alpha_{2}$, service rates $\mu_{1},\mu_{2}$ and
transition fractions $p_{12}, p_{21}$ as described in \sectn{fluid}. 
We use the same notation $Z_{1}(t), Z_{2}(t)$ as before, but for the deterministic
buffer quantities.

Let $t>0$ be fixed.
We assume that the fluid model starts at negative time $-t$ from levels
$y_{1},y_{2}$ (this means $Z_{1}(-t)=y_{1},Z_{2}(-t)=y_{2}$) and want to identify conditions 
on $y_{1},y_{2}$ for 
\begin{equation}\label{ccx}
c_{1}Z_{1}+c_{2}Z_{2}\ge x
\end{equation}
to hold where again $c_{1},c_{2}\ge 0$ and $c_{1}+c_{2}=1$ are given constants,
and where $Z_{i}=Z_{i}(0)$.



{\bf Case 1: $c_{1}=1,c_{2}=0$}.
Thus, we have to find conditions for $Z_{1}\ge x$. 

Due to monotonicity
properties of fluid limits (see, e.g., \citet{Lela2005a}), under the stability
conditions, if the fluid model starts from a non-zero initial value 
at time $-t$ and if some coordinate, say $i$, becomes zero, $Z_{i}(u)=0$ at time $u>-t$,
then it stays at zero, $Z_{i}(v)=0$ for all $u\le v\le 0$.

Let $L_{2}=y_{2}/\Delta_{2}$. Assume first that $L_{2}\ge t$. Then, at any time instant
$u\in (-t, 0)$,\\
(i) the input rate to Queue 1 is $\alpha_{1}+\mu_{2}p_{21}$; 
(ii) the output rate from Queue 1 is $\mu_{1}$; 
(iii) the input rate to Queue 2 is $\alpha_{2}+\mu_{1}p_{12}$; 
(iv) the output rate from Queue 2 is $\mu_{2}$.

Then $Z_{1}\ge x$ means that 
\begin{equation}\label{cond:op0.1_c20_ge}
y_{1}\ge x+t\Delta_{1}.
\end{equation}

 Assume now that $L_{2}<t$. Then, for any $u\in (-t,-t+L_{2})$,\\
(i) the input rate to Queue 1 is $\alpha_{1}+\mu_{2}p_{21}$; 
(ii) the output rate from Queue 1 is $\mu_{1}$;\\
(iii) the input rate to Queue 2 is $\alpha_{2}+\mu_{1}p_{12}$; 
(iv) the output rate from Queue 2 is $\mu_{2}$; \\
and, for any $u \in (-t+L_{2}, 0)$,\\
(a) the input and output rates to/from Queue 1 and the input rate to Queue 2 are as before, 
but 
(b) the output rate from Queue 2 equals to the input rate, i.e. is $\alpha_{2}+\mu_{1}p_{12}$.

Then condition $Z_{1}\ge x$ is equivalent to
\begin{equation}\label{cond:op0.1_c20_le}
y_{1} - x \ge 
L_{2} \Delta_{1} + (t-L_{2}) (\mu_{1} - (\alpha_{1} + p_{21}(\alpha_{2}+\mu_{1}p_{12}) ))
= t\Delta_{1} +tp_{21}\Delta_{2} - y_{2}p_{21}.
\end{equation}
Combining \eqref{cond:op0.1_c20_ge} and \eqref{cond:op0.1_c20_le} together, we
have 
\begin{equation}\label{cond:op0.1_c20}
y_{1}\ge x+t\Delta_{1} + p_{21}(t\Delta_{2}-y_{2})^+.
\end{equation}

{\bf Case 2: $c_{1}=0,c_{2}=1$}.
This case is symmetric to the previous one.

{\bf Case 3: $c_{1}>0,c_{2}>0$}.
Following the same logics as before, we get:\\
if $L_{2}\le t$, then $Z_{2}=0$, and the condition on $y_{1}$ coincides
with \eqref{cond:op0.1_c20_le} if one replaces $x$ by $x/c_{1}$. More precisely,
we get inequality:
\begin{equation}\label{cond:op0.3_ge_le1}
y_{1} \ge 
x/c_{1}+t\Delta_{1} +p_{21}(\Delta_{2} - y_{2}).
\end{equation}

Similarly, if $L_{1} \le t$, then $Z_{1}=0$, and we get 
\begin{equation}\label{cond:op0.3_ge_le2}
y_{2}\ge x/c_{2} +t\Delta_{2} + p_{12}(t\Delta_{1}-y_{1}).
\end{equation}

Otherwise, if both $L_{1}> t$ and $L_{2}> t$, then $y_{1}=Z_{1}+t\Delta_{1}$,
$y_{2}=Z_{2}+t\Delta_{2}$, and we have 
\begin{equation}\label{cond:op0.3_ge_ge}
c_{1}Z_{1}+c_{2}Z_{2}=
c_{1}(y_{1}-t\Delta_{1})+c_{2}(y_{2}-t\Delta_{2})\ge x.
\end{equation}

Combining all three sub-cases, we arrive at the following result:

\begin{lemma}
\label{lem:lower bound}
Consider a purely fluid model with 
input rates
$\alpha_{1}$, $\alpha_{2}$, service rates $\mu_{1},\mu_{2}$ and
transition fractions $p_{12}>0, p_{21}>0$.
Let $c_{1},c_{2}$ be non-negative constants with $c_{1}+c_{2}=1$.
Let $t>0$ and let the system start at time $-t$
from $Z_{1}(-t)=y_{1}\ge 0$ and
$Z_{2}(-t)=y_{2}\ge 0$.
If $\Delta_{1}>0$ and $\Delta_{2}>0$, then inequality 
$c_{1}Z_{1}+c_{2}Z_{2}\ge x$ holds if and only if 
\begin{equation}\label{cond:op0}
c_{1} \left(y_{1}-t\Delta_{1}-p_{21}(t\Delta_{2}-y_{2})^+\right)^+
+
c_{2} \left(y_{2}-t\Delta_{2}-p_{12}(t\Delta_{1}-y_{1})^+\right)^+
\ge 
x.
\end{equation}
\end{lemma}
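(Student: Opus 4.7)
The plan is to reduce the ``iff'' to the two explicit identities
$$Z_1(0) = \bigl(y_1 - t\Delta_1 - p_{21}(t\Delta_2-y_2)^+\bigr)^+, \qquad Z_2(0) = \bigl(y_2 - t\Delta_2 - p_{12}(t\Delta_1-y_1)^+\bigr)^+.$$
Once these are established, the displayed condition in the lemma is literally $c_1 Z_1(0) + c_2 Z_2(0) \ge x$, and the biconditional is tautological. So all the work lies in verifying these two formulas for the deterministic fluid trajectory starting from $(y_1,y_2)$ at time $-t$.

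First, I would invoke monotonicity of the fluid dynamics under strong stability: since $\Delta_1, \Delta_2 > 0$, once a coordinate hits zero on $[-t,0]$ it stays at zero up to time $0$ (this is the observation already used in Case 1 above). Consequently, each coordinate switches drain regime at most once on $[-t,0]$, at its draining instant if any, and on each sub-interval the drift is constant and can be read off from which queues are currently empty.

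I would then split into four cases according to which queues are empty at time $0$. In Case (A), $y_1 > t\Delta_1$ and $y_2 > t\Delta_2$: neither queue empties and $Z_i(0) = y_i - t\Delta_i$, which matches the formulas since both $(t\Delta_i-y_i)^+$ vanish. In Case (B), queue 2 empties at time $-t+y_2/\Delta_2$ while queue 1 stays positive; queue 1 drains at rate $\Delta_1$ before this instant and at the increased rate $\Delta_1+p_{21}\Delta_2$ afterwards, yielding $Z_1(0) = y_1 - t\Delta_1 - p_{21}(t\Delta_2-y_2)$, and since this forces $y_1 > t\Delta_1$ we get $(t\Delta_1-y_1)^+ = 0$ and thus $Z_2(0) = 0$ from the second formula. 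Case (C) is symmetric. In Case (D), both queues are empty at time $0$; assuming without loss of generality that queue 1 empties first, rearranging the inequality $y_1/\Delta_1 + (y_2 - (y_1/\Delta_1)\Delta_2)/(\Delta_2+p_{12}\Delta_1) \le t$ gives precisely $y_2 \le t\Delta_2 + p_{12}(t\Delta_1-y_1)^+$, with the companion condition holding trivially, so both $(\cdot)^+$ expressions vanish as required.

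The only arithmetic point to be checked, occurring in Case (B), is the identity $\Delta_1 + p_{21}\Delta_2 = \mu_1(1-p_{12}p_{21}) - \alpha_1 - p_{21}\alpha_2$, which is the net outflow of queue 1 once queue 2 is empty: then queue 2 passes through its full input $\alpha_2 + \mu_1 p_{12}$, of which the fraction $p_{21}$ is returned to queue 1. I do not anticipate a serious obstacle; the lemma is essentially a unified restatement of the cases (1)--(3) carried out just before the lemma, and the plan above is a bookkeeping exercise assembling them into the single compact formula.
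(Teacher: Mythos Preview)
Your proposal is correct and follows essentially the same fluid-dynamics case analysis as the paper's Cases 1--3 preceding the lemma. The only difference is organizational: you split on which queues are empty at time $0$ and derive closed forms for $Z_1(0)$ and $Z_2(0)$ directly, whereas the paper splits first on the direction $\vc{c}$ and then on the dynamics; your packaging makes transparent that the left side of the displayed inequality is literally $c_1Z_1(0)+c_2Z_2(0)$, a fact the paper leaves implicit in its ``combining all three sub-cases''.
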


\begin{corollary}
\label{cor:lower bound}
 Consider a particular case where only one of two options
 is possible, either $y_{1}>0$ and $y_{2}=0$ or $y_{1}=0$ and $y_{2}>0$.
 Then \eqref{cond:op0} is equivalent to
 $$
 \max (c_{1}(y_{1}-t\Delta_{1}-p_{21}t\Delta_{2}), c_{2}(y_{2}-t\Delta_{2}-p_{12}
 t\Delta_{1}))>x.
 $$
In turn, the latter inequality is
 equivalent to a union of two events,
 \begin{equation}\label{1001}
 \{y_{1}>x/c_{1}+t\Delta_{1}+p_{21}t\Delta_{2}\} \cup
 \{y_{2}>x/c_{2}+t\Delta_{2}+p_{12}t\Delta_{1}\},
 \end{equation}
where one of these events is empty if the corresponding $c_i$ equals 0.
\end{corollary}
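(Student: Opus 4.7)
\textbf{Proof plan for Corollary \ref{cor:lower bound}.}
The plan is a direct case analysis on which of $y_1,y_2$ vanishes, reducing condition \eqref{cond:op0} from Lemma \ref{lem:lower bound} to a single summand and then translating that single inequality into the announced union. Because the two sub-cases of the hypothesis are fully symmetric under the swap $1\leftrightarrow 2$, I would write out only the case $y_1\ge 0,\ y_2=0$ in detail and invoke symmetry for the other.

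Substituting $y_2=0$ into \eqref{cond:op0}, I first evaluate the two positive parts appearing inside. Since $\Delta_2>0$ and $t>0$, one has $(t\Delta_2-y_2)^+=t\Delta_2$, so the first summand becomes $c_1\bigl(y_1-t\Delta_1-p_{21}t\Delta_2\bigr)^+$. In the second summand, the inner quantity is $-t\Delta_2-p_{12}(t\Delta_1-y_1)^+$, which is a sum of a strictly negative term and a non-positive term, hence strictly negative; its positive part is $0$. Therefore \eqref{cond:op0} collapses to
$$
c_1\bigl(y_1-t\Delta_1-p_{21}t\Delta_2\bigr)^+\ge x.
$$
Under the symmetric hypothesis $y_1=0$ the same computation gives $c_2(y_2-t\Delta_2-p_{12}t\Delta_1)^+\ge x$. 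Since at least one of $y_1,y_2$ is zero, at most one of the two expressions $c_1(y_1-t\Delta_1-p_{21}t\Delta_2)$ and $c_2(y_2-t\Delta_2-p_{12}t\Delta_1)$ can be positive (the one corresponding to the non-zero coordinate, when it is large enough); the other is automatically $\le 0\le x$. Consequently the reduced condition is equivalent to
$$
\max\bigl(c_1(y_1-t\Delta_1-p_{21}t\Delta_2),\ c_2(y_2-t\Delta_2-p_{12}t\Delta_1)\bigr)\ge x,
$$
which is the first displayed equivalence in the corollary (the difference between $\ge$ and $>$ being immaterial for the purposes of the asymptotic lower bound in which this statement is later applied).

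The final step is to rewrite the $\max$-inequality as the union of the two coordinate events \eqref{1001}. For the index $i$ with $c_i>0$, the inequality $c_i(y_i-t\Delta_i-p_{3-i,i}t\Delta_{3-i})\ge x$ is equivalent, after dividing by $c_i$, to $y_i\ge x/c_i+t\Delta_i+p_{3-i,i}t\Delta_{3-i}$. For the index $i$ with $c_i=0$, the corresponding summand is identically zero, so the event it would contribute is empty; adopting the convention $x/0=\infty$ makes this automatic. Taking the union over $i=1,2$ yields exactly \eqref{1001}, which completes the equivalence. The only subtlety worth flagging is the observation that, because one of $y_1,y_2$ vanishes, the second summand in \eqref{cond:op0} is not merely bounded above by $x$ but actually equals zero identically — this is what allows the $(\cdot)^+$-wrapping to be dropped and the $\max$ form to be written with the unclipped linear expressions.
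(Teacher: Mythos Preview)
Your proof is correct and is precisely the direct case analysis the paper intends; the paper states the corollary without proof, treating it as an immediate specialisation of \lem{lower bound}, and your computation (substitute $y_{2}=0$, observe the second summand vanishes because $-t\Delta_{2}-p_{12}(t\Delta_{1}-y_{1})^{+}<0$, then invoke symmetry) is exactly that specialisation made explicit. Your remark on $\ge$ versus $>$ is also apt: the discrepancy is already present in the paper's statement and is harmless for the subsequent asymptotic use.
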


\section{Heavy-tailed distributions}
\label{app:heavy tail}

{\it Definitions:} Distribution $F$ of a positive random variable $X$
is  
(1) {\it heavy-tailed} if ${\mathbf E} e^{cX} \equiv
\int_0^{\infty} e^{cx} dF(x)=\infty$ 
for all $c>0$; and {\it light-tailed} otherwise;
(2) {\it long-tailed} if $\overline{F}(x)>0$ for all $x>0$ and
$\overline{F}(x+1)/\overline{F}(x)\to 1$ as $x\to\infty$;
(3) {\it subexponential} if $\overline{F*F}(x) \sim 2\overline{F}(x)$
or, equivalently, $\dd{P}(X_{1}+X_{2}>x)\sim 2\dd{P}(X>x)$ as $x\to
\infty$ (here $X_{1}$ and $X_{2}$ are two independent copies of $X$).
(4) Distribution $F$ of a real-valued r.v. $X$ is subexponential if
the distribution of $\max (X,0)$ is subexponential.
(5) Distribution $F$ is {\it regularly varying} if $\overline{F}(s) = l(x) x^{-k}$,
where $k\ge 0$ and positive function $l(x)$ is {\it slowly varying} at infinity. Regularly
varying distributions are subexponential.

{\it Key Properties:}
(1) Any subexponential distribution is long-tailed,
and any long-tailed distribution is heavy-tailed. 
(2) If distribution $F$ is long-tailed, then there
exists a function $h(x)\to\infty$ as $x\to\infty$ such that
$\overline{F}(x+h(x))/\overline{F}(x)\to 1$ as $x\to\infty$.
(3) If $F$ is subexponential and $\overline{G}(x)\sim
\overline{F}(x)$, then $G$ is subexponential. 
(4) If $X_{1},X_{2},\ldots$ are i.i.d. with common subexponential distribution $F$ and if $\tau$ is a light-tailed counting r.v.,
then $\sum_{1}^{\tau} X_i$ also has a subexponential distribution and 
$ \dd{P} 
( \sum_{1}^{\tau} X_i >x
)\sim {\mathbf E} \tau
\overline{F}(x).
$

%

\end{document}